\documentclass[11pt]{amsart}
\usepackage{amsmath,amssymb}
\numberwithin{equation}{section}
 \usepackage{xcolor}

\newtheorem{theorem}{Theorem}[section]
\newtheorem{thm}[theorem]{Theorem}

\newtheorem{lem}[theorem]{Lemma}
\newtheorem{prop}[theorem]{Proposition}
\newtheorem{coro}[theorem]{Corollary}
\newtheorem{corollary}[theorem]{Corollary}

\theoremstyle{definition}

\newtheorem{defi}[theorem]{Definition}

\newtheorem{remark}[theorem]{Remark}
 \newtheorem*{ackn}{Acknowledgements}
 
 \newtheorem*{thmA}{Theorem A} 
 \newtheorem*{thmB}{Theorem B}

 \theoremstyle{plain}
\newtheorem*{namedthm}{\namedthmname}
\newcounter{namedthm}

\makeatletter

 \newcommand{\D}{\mathbb D}
 \newcommand{\R}{\mathbb R}
 
 \newcommand{\C}{\mathbb C}

 \newcommand{\e}{\varepsilon}

 \newcommand{\f}{\varphi}
 
 \newcommand{\p}{\psi}

 \newcommand \la {\lambda}

 \newcommand \psh {{\rm PSH}}
 \newcommand \PSH {{\rm PSH}}

\newcommand{\Ric}{{\rm Ric}}

 \usepackage{hyperref}
\hypersetup{
    unicode=false,        
    pdftoolbar=true,      
    pdfmenubar=true,       
    pdffitwindow=false,     
    pdfstartview={FitH},    
    pdftitle={Uniform estimates for CMAE},    
    pdfauthor={Guedj, Lu},     
    colorlinks=true,       
   linkcolor=blue,          
    citecolor=blue,        
    filecolor=black,      
    urlcolor=blue}

\frenchspacing

\textwidth=13.5cm
\textheight=23cm
\parindent=16pt
\topmargin=-0.5cm

\subjclass[2010]{32W20, 32U05, 32Q15, 35A23}

\keywords{Monge-Amp\`ere  equation, a priori estimates}

 \begin{document}

\title[Quasi-plurisubharmonic envelopes 1]{Quasi-plurisubharmonic envelopes 1: uniform estimates on K\"ahler manifolds}

\author{Vincent Guedj \& Chinh H. Lu}

\address{Institut de Math\'ematiques de Toulouse   \\ Universit\'e de Toulouse \\
118 route de Narbonne \\
31400 Toulouse, France\\}

\email{\href{mailto:vincent.guedj@math.univ-toulouse.fr}{vincent.guedj@math.univ-toulouse.fr}}
\urladdr{\href{https://www.math.univ-toulouse.fr/~guedj}{https://www.math.univ-toulouse.fr/~guedj/}}

\address{Universit\'e Paris-Saclay, CNRS, Laboratoire de Math\'ematiques d'Orsay, 91405, Orsay, France.}

\email{\href{mailto:hoang-chinh.lu@universite-paris-saclay.fr}{hoang-chinh.lu@universite-paris-saclay.fr}}
\urladdr{\href{https://www.imo.universite-paris-saclay.fr/~lu/}{https://www.imo.universite-paris-saclay.fr/~lu/}}
\date{\today}

 \begin{abstract}
 We develop a new   approach to $L^{\infty}$-a priori estimates for degenerate complex Monge-Amp\`ere equations on complex manifolds.
 It only relies on compactness and envelopes properties of quasi-plurisubharmonic functions.
 Our method  allows one to obtain new and efficient proofs of several fundamental
 results in K\"ahler geometry as  we explain in this  article.
 
 In a sequel we shall explain how this approach also applies to  the hermitian setting producing
 new relative a priori bounds, as well as existence results.
 \end{abstract}

 \maketitle

\tableofcontents

\section*{Introduction}

Complex Monge-Amp\`ere equations have been one of the most powerful tools in K\"ahler geometry since 
Yau's solution to the Calabi conjecture \cite{Yau78}.
A notable application is the construction of K\"ahler-Einstein metrics:
given $(X,\omega)$ a compact K\"ahler manifold of complex dimension $n$ and 
$\mu$ an appropriate volume form normalized by $\mu(X)=\int_X \omega^n$,
one seeks for  a solution $\f:X \rightarrow \R$ to
$$
(\omega+dd^c \f)^n=e^{-\la  \f}\mu,
$$
where $d=\partial+\overline{\partial}$, $d^c=i (\partial-\overline{\partial})$
and $\la  \in \R$ is a constant whose sign depends on that of $c_1(X)$.
The metric $\omega_{\f}:=\omega+dd^c \f$ is then K\"ahler-Einstein
as 
$\Ric(\omega_{\f})=\la \omega_{\f}$. 

When $\la \leq 0$, Yau \cite{Yau78} (see also \cite{Aub78} when $\la<0$)
showed the existence of a unique (normalized) solution $\f$ by
establishing a priori estimates along a continuity method, the most delicate one being the uniform
a priori estimate that he established by using  Moser iteration process.

In recent years degenerate complex Monge-Amp\`ere equations have been intensively studied by many authors.
 In relation to the Minimal Model Program, they led to the construction of singular K\"ahler-Einstein metrics
 (see \cite{EGZ09,GZbook,BBEGZ19} and the references therein).
 The main analytical input came here from pluripotential theory which allowed Kolodziej \cite{Kol98}
 to establish uniform a priori estimates 
 when $\mu=fdV_X$ has density in $L^p$ for some $p>1$.

 Using   different methods (Gromov-Hausdorff techniques),
the case $\la>0$
 (Yau-Tian-Donaldson conjecture) 
 has been settled by 
Chen-Donaldson-Sun \cite{CDS15,DonICM18}. Again establishing a uniform a priori estimate in this
context turned out to be the most delicate issue, a key step being obtained
by Donaldson-Sun \cite{DS14} through a refinement of H\"ormander $L^2$-techniques.
An alternative pluripotential variational approach has been developed by Berman-Boucksom-Jonsson in \cite{BBJ15},
based on finite energy classes studied in \cite{GZ07} and variational tools
obtained in \cite{BBGZ13}.
This approach has been pushed one step further by Li-Tian-Wang who have settled the case 
of singular Fano varieties \cite{LTW20}.
 
 \medskip

The main goal of this article is to provide yet another approach for establishing such uniform a priori estimates.
While the pluripotential approach consists in measuring the Monge-Amp\`ere capacity of
sublevel sets $(\f<-t)$, we directly measure the volume of the latter, avoiding
delicate integration by parts. Our approach thus extends with minor modifications
to the hermitian (non K\"ahler) setting, providing several new results
that will be discussed in a companion paper \cite{GL21}:
the hermitian setting introduces several technicalities and new challenges
that might affect the clarity of exposition and could scare the K\"ahler  reader away.

In the whole article we  let thus $X$ denote a compact K\"ahler  manifold of complex dimension $n$.
We fix $\omega$ a closed semi-positive $(1,1)$-form 
which is big,  i.e.
$$
V:=\int_X \omega^n >0.
$$
We let $\PSH(X,\omega)$ denote the set of $\omega$-plurisubharmonic functions: these are
functions $u:X \rightarrow \R \cup \{-\infty\}$ which are locally given as the sum of 
a smooth and a plurisubharmonic function, and such that
$\omega+dd^c u \geq 0$ is a positive current.

 \smallskip
 
Our first main result is 
a brand new proof of
the following   a priori estimate :

\begin{thmA}
{\it 
 Let $\omega$ be semi-positive and big.
   Let $\mu$ be a probability measure  such that
  $\PSH(X,\omega) \subset L^m(\mu)$ for some $m>n$.
Any bounded solution $\f \in\PSH(X,\omega)$  to 
 $V^{-1}(\omega+dd^c \f)^n=\mu$  satisfies a uniform a priori bound
 $$
 {\rm Osc}_X (\f) \leq  T_{\mu}
 $$
 for some uniform constant $T_{\mu}=T(A_m(\mu))$ which depends on an upper bound on
 $$
A_m({\mu}):= \sup \left\{ \int_X (-\p)^m d\mu, \; \p \in \PSH(X,\omega) \text{ with } \sup_X \p=0 \right\}.
 $$
 }
\end{thmA}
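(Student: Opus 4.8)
The plan is to bypass Monge--Amp\`ere capacities entirely: instead of estimating $\capa_\omega(\{\varphi<-t\})$ and integrating by parts, one bounds \emph{directly} the volume of the sublevel sets $\{\varphi<-t\}$, and then upgrades the resulting polynomial decay to an honest $L^\infty$ bound by a self-improving iteration built on envelopes. First I would normalize: replacing $\varphi$ by $\varphi-\sup_X\varphi$, assume $\sup_X\varphi=0$, so that ${\rm Osc}_X(\varphi)=-\inf_X\varphi=:M$ and it suffices to bound $M$. From $\PSH(X,\omega)\subset L^m(\mu)$ and the Chebyshev inequality one records, for every $\psi\in\PSH(X,\omega)$ with $\sup_X\psi=0$,
$$
\mu(\{\psi<-s\})\le s^{-m}\int_X(-\psi)^m\,d\mu\le A_m(\mu)\,s^{-m}\qquad(s>0),
$$
applied in particular to $\varphi$.

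The conceptual heart is a volume estimate for sublevel sets that uses \emph{no} integration by parts. For $t>0$ the function $\max(\varphi,-t)$ is $\omega$-plurisubharmonic and bounded, hence $\int_X\MA_\omega(\max(\varphi,-t))=\int_X\omega^n=V$; on the open set $\{\varphi>-t\}$ this measure coincides with $\MA_\omega(\varphi)=V\mu$, while on $\{\varphi<-t\}$ it equals $\omega^n$. Comparing total masses yields
$$
\int_{\{\varphi<-t\}}\omega^n\ \le\ V\,\mu(\{\varphi\le-t\})\ \le\ V\,A_m(\mu)\,t^{-m}.
$$
This is the substitute for Ko\l odziej's capacity estimate; note that the argument only uses the semi-positivity of $\omega$, which is why the method will transplant to the hermitian setting in \cite{GL21}.

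It remains to turn polynomial volume decay into a bound on $M$. Here I would introduce the envelopes $u_t:=P_\omega(\min(\varphi+t,0))$, the largest $\omega$-plurisubharmonic function lying below $\min(\varphi+t,0)$. One checks the elementary facts $\varphi\le u_t\le0$; that the constant $t-M$ is a competitor when $t\le M$, so that $\inf_X u_t=t-M$ and thus $M=t-\inf_X u_t$; and, via the orthogonality relation for envelopes, that $\MA_\omega(u_t)$ is carried by the contact set $\{u_t=\min(\varphi+t,0)\}$, where on the part $\{u_t<0\}$ one has $u_t=\varphi+t$, whence $\MA_\omega(u_t)\le\MA_\omega(\varphi)=V\mu$ there and $\MA_\omega(u_t)(\{u_t<0\})\le V\mu(\{\varphi<-t\})\le V A_m(\mu)\,t^{-m}$. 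Feeding the volume estimate back into this family at a carefully chosen sequence of levels $t_j\uparrow t_\infty$, with increments $\delta_j=t_{j+1}-t_j$, I expect to obtain a recursion of the shape
$$
a_{j+1}\ \le\ C\,\delta_j^{-n}\,a_j^{\,1+\kappa},\qquad \kappa=\kappa(m,n)>0,
$$
for the monotone quantities $a_j$ measuring $\{\varphi<-t_j\}$, with $C=C(n,V,A_m(\mu))$; the exponent gain $\kappa>0$ is available precisely because $m>n$. A Ko\l odziej/De Giorgi-type summation then forces $a_j\to0$ while $t_j$ stays below an explicit $T_\mu=T(n,m,V,A_m(\mu))$, so that $\{\varphi<-T_\mu\}$ has zero volume; since $\varphi$ is upper semicontinuous this gives $\varphi\ge-T_\mu$, i.e. $M\le T_\mu$.

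The main obstacle is this last step: the single-level volume estimate does \emph{not} by itself bound $M$, and one must play the mass comparison off against itself at two nearby levels $t$ and $t+\delta$. The $n$-th power intrinsic to the Monge--Amp\`ere operator produces a factor $\delta^{-n}$ that blows up as $\delta\to0$, and the scheme closes only if this is beaten by the $t^{-m}$ decay of $\mu(\{\varphi<-t\})$ — which is exactly where $m>n$ enters. Secondary points requiring care are the possible degeneracy of $\omega^n$ when $\omega$ is merely semi-positive (so that "volume" must be read throughout as the relevant Monge--Amp\`ere mass, not a Lebesgue measure), and the basic calculus of the envelopes $u_t$, in particular the orthogonality relation on which the bookkeeping above rests.
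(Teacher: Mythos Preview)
Your envelope construction is sound: with $u_t=P_\omega(\min(\varphi+t,0))$ one has $\sup_X u_t=0$, $u_t\le\varphi+t$, and the orthogonality relation together with Lemma~\ref{lem:Demkey} give $(\omega+dd^c u_t)^n\le V\mu$ on the contact set $\{u_t<0\}$, hence $(\omega+dd^c u_t)^n(\{u_t<0\})\le V\mu(\{\varphi<-t\})$. The gap is the step you flag with ``I expect to obtain a recursion of the shape $a_{j+1}\le C\delta_j^{-n}a_j^{1+\kappa}$'': this is the heart of the argument, and it does not follow from the ingredients you have assembled. From $\{\varphi<-t_{j+1}\}\subset\{u_{t_j}<-\delta_j\}$ and Chebyshev you get $a_{j+1}\le\delta_j^{-m}\int_X(-u_{t_j})^m\,d\mu$, but since $\sup_X u_{t_j}=0$ for every $j$, the hypothesis only yields the \emph{fixed} bound $\int_X(-u_{t_j})^m\,d\mu\le A_m(\mu)$, with no dependence on $a_j$ and hence no self-improvement. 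To obtain the factor $a_j^{1+\kappa}$ you would need the smallness of the Monge--Amp\`ere mass $(\omega+dd^c u_{t_j})^n(\{u_{t_j}<0\})$ to force smallness of a $\mu$-moment of $u_{t_j}$; nothing in your outline provides that conversion. Your $\omega^n$-volume estimate for $\{\varphi<-t\}$ does not help either: $\omega^n$ and $\mu$ are a priori unrelated, and $\omega^n$ may even vanish on open sets when $\omega$ is only semi-positive.

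The paper closes exactly this gap by a different mechanism. Rather than a family of piecewise-linear truncations, it uses a \emph{single} envelope $u=P_\omega(\chi\circ\varphi)$ for a concave weight $\chi$ with $\chi'\ge1$, to be chosen. The key Lemma~\ref{lem:GLkey} gives $(\omega+dd^c u)^n\le V(\chi'\circ\varphi)^n\mu$ on the contact set; a H\"older step then bounds the energy $\int_X(-u)^\varepsilon(\omega+dd^c u)^n$ by $A_m(\mu)^\varepsilon$ times a power of $\int_X(\chi'\circ\varphi)^{n+2\varepsilon}d\mu$, and $\chi$ is chosen so that the latter is at most $2$. This controls $\sup_X u$ from below (via the energy, \emph{not} via compactness alone), hence $\|u\|_{L^m(\mu)}$ and therefore $\|\chi\circ\varphi\|_{L^m(\mu)}$. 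Chebyshev now reads $\mu(\varphi<-t)\le C|\chi(-t)|^{-m}$, and the defining relation for $\chi$ turns this into a differential inequality for $h(t)=-\chi(-t)$ whose integration forces $T_{\max}<\infty$. The growth of $\chi'$ is precisely what manufactures the gain you were hoping to get from a De~Giorgi iteration; with your piecewise-linear $\chi_t(s)=\min(s+t,0)$ one has $\chi_t'\in\{0,1\}$ and no such gain is available.
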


H\"older inequality shows that this result covers the case when $\mu=fdV_X$ is absolutely continuous with respect to Lebesgue measure,
with density $f$ belonging to $L^p$, $p>1$, or to an appropriate Orlicz class, as we explain in
Section \ref{sec:abscont}.

A crucial particular case of this estimate is due to   Kolodziej \cite{Kol98}. 
Other important special cases have been previously obtained in \cite{EGZ09,EGZ08,DP10}.
 Our new method covers all these settings at once, it also permits to recover the main estimates 
of \cite{BEGZ10} (big cohomology classes) and \cite{DnGG20} (collapsing families)
as we explain in Sections \ref{sec:big} and \ref{sec:families}.
A slight refinement of our technique allows one to establish an important stability estimate
(see Theorem \ref{thm:stability}).

\smallskip

There are several  geometric situations when one can not expect the Monge-Amp\`ere
potential $\f$ to be  globally bounded. 
 We next consider the   equation 
 \begin{equation*}  
V^{-1}  (\omega+dd^c \f)^n=   fdV_X,
 \end{equation*} 
where  the density $f \in L^1(X)$ does not belong to any  
good Orlicz class.
Since the  measure $\mu=fdV_X$ is non pluripolar, there exists
a unique finite energy solution $\f$ (see \cite{GZbook}, \cite{Din09}).
It is crucial to understand its locally bounded locus.
 
As $\omega$  is a   semi-positive and big $(1,1)$ form, 
we can find
$\rho$ an $\omega$-psh function with analytic singularities such that
$\omega+dd^c \rho \geq \delta \omega_X$ is a K\"ahler current
(see \cite[Theorem 0.5]{DP04}).
For  $\p$ quasi-psh and  $c>0$, we set
$$
E_c(\p):=\{ x \in X , \; \nu(\p,x) \geq c \},
$$
where $\nu(\p,x)$ denotes the  Lelong number of $\p$ at $x$. 
A celebrated theorem of Siu ensures that for any $c>0$, the
set $E_c(\p)$ is a closed analytic subset of $X$.

Our second main result provides
the following a priori estimate,
which extends  
a result of DiNezza-Lu \cite{DnL17}:

\begin{thmB}
{\it 
Assume   $f=g e^{-\p}$, where $0 \leq g \in L^p(dV_X)$, $p>1$,  and  $\p$ is a quasi-psh function.
Then  there exists a unique $\f \in \mathcal{E}(X,\omega)$ such that
\begin{itemize}
\item $\alpha (\p+\rho)-\beta \leq \f \leq 0$ with $\sup_X \f=0$;
\item $\f$ is locally bounded in the open set $\Omega:= X \setminus \{\rho=-\infty\} \cup E_{\frac{1}{q}}(\p)$;
\item $ V^{-1}(\omega+dd^c \f)^n=   fdV_X 
 \; \;  \text{ in } \; \;
\Omega$,
\end{itemize}
where $\alpha,\beta>0$ depend on  an upper bound for $||g||_{L^p}$
and $\frac{1}{p}+\frac{1}{q}=1$.
}
\end{thmB}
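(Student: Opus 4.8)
The plan is to obtain $\varphi$ as a limit of solutions of regularized equations, and to prove the two–sided bound by a priori estimates that survive the limit; the lower bound of the first bullet is the substantial point.

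\textbf{Existence, uniqueness and the upper bound.} The equation forces $\int_X f\,dV_X=1$, so $\mu:=f\,dV_X$ is a probability measure; being absolutely continuous it charges no pluripolar set. By the variational construction of finite–energy solutions (\cite{GZbook,Din09}) there is therefore a unique $\varphi\in\mathcal{E}(X,\omega)$ with $\sup_X\varphi=0$ solving $V^{-1}(\omega+dd^c\varphi)^n=\mu$; since $\mu$ puts no mass on pluripolar sets this holds on all of $X$ in the strong pluripotential sense, in particular on $\Omega$, and $\sup_X\varphi=0$ gives $\varphi\le0$. It remains to prove the lower bound and the local boundedness.

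\textbf{The uniform lower bound.} Set $\psi_k:=\max(\psi,-k)$ and $\rho_k:=\max(\rho,-k)$, bounded and decreasing to $\psi,\rho$, with $dd^c\psi_k\ge-C_\psi\omega$ for a fixed quasi-psh constant $C_\psi$ of $\psi$. The densities $g\,e^{-\psi_k}$ lie in $L^p(dV_X)$, so by Theorem A there are bounded $\varphi_k\in\PSH(X,\omega)$, $\sup_X\varphi_k=0$, with $V^{-1}(\omega+dd^c\varphi_k)^n=c_k^{-1}g\,e^{-\psi_k}\,dV_X$, where $c_k:=\int_X g\,e^{-\psi_k}\,dV_X\uparrow1$. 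I claim $\varphi_k\ge\alpha(\psi_k+\rho_k)-\beta$ with $\alpha,\beta>0$ independent of $k$. To see this, fix $\alpha$ small (depending only on $C_\psi$, $\rho$ and $\delta$) so that $\theta_k:=\omega+\alpha\,dd^c(\psi_k+\rho_k)$ is a closed positive current with $\theta_k\ge\alpha\delta\,\omega_X$ on the open set $\{\rho>-k\}$, which exhausts $X\setminus\{\rho=-\infty\}$; in particular $\theta_k^n\ge c_0\,dV_X$ there. Then $\varphi_k':=\varphi_k-\alpha(\psi_k+\rho_k)$ is bounded and $\theta_k$-psh, solves $V^{-1}(\theta_k+dd^c\varphi_k')^n=c_k^{-1}g\,e^{-\psi_k}\,dV_X$, and satisfies $\sup_X\varphi_k'\ge-\alpha(\sup_X\psi+\sup_X\rho)$. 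Running the volume-of-sublevel-sets argument of Theorem A for $\varphi_k'$ on $(X,\theta_k)$ — with $\theta_k^n\ge c_0\,dV_X$ playing the role of $\omega^n$, and a comparison of $\theta_k$-psh with $\omega$-psh functions together with the uniform version of Skoda's integrability theorem (where the relation $\tfrac1p+\tfrac1q=1$ fixes the admissible exponents) controlling the right–hand side uniformly in $k$ — bounds ${\rm Osc}_X(\varphi_k')$ independently of $k$, hence $\inf_X\varphi_k'\ge-\beta$, which is the claim. Letting $k\to\infty$, compactness of $\omega$-psh functions and stability of the Monge--Amp\`ere operator (Theorem \ref{thm:stability}) give $\varphi_k\to\varphi$, so $\varphi\ge\alpha(\psi+\rho)-\beta$; adjusting $\alpha,\beta$ yields the first bullet. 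The delicate point — and the main obstacle — is precisely the uniformity in $k$ near the singular locus $\{\rho=-\infty\}\cup E_{1/q}(\psi)$, where $g\,e^{-\psi_k}$ admits no uniform pointwise bound: it is absorbed by the K\"ahler-current positivity of $\omega+dd^c\rho$ and the sharp Skoda exponent $q$.

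\textbf{Local boundedness on $\Omega$.} Fix $x_0\in\Omega$. Since $x_0\notin E_{1/q}(\psi)$ and $x\mapsto\nu(\psi,x)$ is upper semicontinuous, a small coordinate ball $B\Subset\Omega$ around $x_0$ satisfies $\sup_{\overline B}\nu(\psi,\cdot)<1/q$; by Skoda's theorem $e^{-q'\psi}\in L^1(B)$ for some $q'>q$, so H\"older's inequality gives $f=g\,e^{-\psi}\in L^r(B)$ for some $r>1$. As $x_0\notin\{\rho=-\infty\}$ we may also take $\rho$ bounded on $\overline B$, so the lower bound already proved reads $\varphi\ge\alpha\psi-C_B$ on $B$; in particular $\varphi\not\equiv-\infty$ there, and $\varphi$ solves $V^{-1}(\omega+dd^c\varphi)^n=f\,dV_X$ on $B$ with density in $L^r$, $r>1$. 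Comparing $\varphi$ with the solution of the corresponding local Dirichlet problem on a slightly smaller ball — bounded by Kolodziej's local estimate, which is the local counterpart of the $L^\infty$-bound of Theorem A — yields $\varphi\in L^\infty_{\mathrm{loc}}(\Omega)$, which gives the second and third bullets and completes the proof.
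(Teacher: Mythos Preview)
Your argument has a genuine gap exactly where you yourself flag ``the delicate point''. You want Theorem~A, applied to $\varphi_k'$ on $(X,\theta_k)$ against $\mu_k=c_k^{-1}g\,e^{-\psi_k}dV_X$, to bound $\mathrm{Osc}_X(\varphi_k')$ uniformly in $k$; but the constant in Theorem~A is governed by
\[
A_m(\theta_k,\mu_k)=\sup\Bigl\{\|v\|_{L^m(\mu_k)}:\ v\in\PSH(X,\theta_k),\ \sup_Xv=0\Bigr\},
\]
and this quantity blows up whenever $\psi$ is unbounded below. Indeed $\theta_k+dd^c\bigl(-\alpha(\psi_k+\rho_k)\bigr)=\omega\ge0$, so $v_k:=\alpha\bigl(\inf_X(\psi_k+\rho_k)-(\psi_k+\rho_k)\bigr)$ lies in $\PSH(X,\theta_k)$ with $\sup_Xv_k=0$; on any fixed open set where $\psi$ and $\rho$ are bounded (such a set has $\mu_k$-mass bounded away from~$0$) one has $|v_k|\ge\alpha|\inf_X(\psi_k+\rho_k)|-C\ge\alpha k-C$. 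Hence $A_m(\theta_k,\mu_k)\gtrsim k\to\infty$. No appeal to Skoda integrability or to the K\"ahler-current positivity of $\omega+dd^c\rho$ repairs this: the class $\PSH(X,\theta_k)$ grows with~$k$ precisely because you have moved the singular potential into the reference form. Your separate local-boundedness paragraph is also incomplete, since comparing an a~priori unbounded finite-energy function with a bounded local Dirichlet solution requires a comparison principle you have not supplied.

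The paper takes a different route that avoids any approximation of~$\varphi$. After reducing, via Demailly's equisingular approximation, to the case where $\psi$ has analytic singularities concentrated along $E_{1/q}(\psi)$ (hence is smooth on $\Omega$), it works directly with the finite-energy solution~$\varphi$. Setting $\tilde\psi=a\psi+\rho$ (arranged to be $\omega$-psh) and $u=P_\omega(2\varphi-\tilde\psi)$, on the contact set $\{u=2\varphi-\tilde\psi\}$ one has $a\psi=2\varphi-u-\rho$, whence $e^{-\psi}\le e^{u/a}e^{-2\varphi/a}$. After a short argument bounding $\sup_Xu$ from above, this yields $(\omega+dd^cu)^n\le c\,g\,e^{-2\varphi/a}dV_X$. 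The decisive gain is that $\varphi\in\mathcal E(X,\omega)$ lies in a compact energy class, so $\int_Xe^{-\lambda\varphi}dV_X<\infty$ for \emph{every} $\lambda>0$; thus $g\,e^{-2\varphi/a}$ satisfies the hypothesis of Theorem~A, giving $u\ge-M$ and hence $2\varphi\ge\tilde\psi-M$. Since $\psi$ is now smooth on $\Omega$ and $\rho$ is locally bounded there, local boundedness of $\varphi$ on $\Omega$ follows immediately---no local Dirichlet argument is needed. Your scheme has no substitute for this exchange of the uncontrollable factor $e^{-\psi}$ for the uniformly integrable factor $e^{-c\varphi}$.
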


Again the proof we provide is direct,
 and can be extended to the hermitian setting
(see \cite{GL21}). We finally show in Section \ref{sec:local} how the same arguments
can be applied to efficiently solve the Dirichlet problem in pseudoconvex domains.

\medskip

\noindent {\it Comparison with other works.}
Yau's proof of his famous $L^{\infty}$-a priori estimate \cite{Yau78} goes through a   Moser iteration process.
Although Yau could deal with some singularities, the method does not apply when the right hand side is too degenerate
(see however \cite{Cao85,Tos10} for   further applications of Yau's method).

An important generalization of Yau's estimate has been provided by Kolodziej \cite{Kol98} using pluripotential techniques.
These  have been further generalized in \cite{EGZ09,EGZ08,DP10,BEGZ10}
in order to deal with less positive or collapsing families of cohomology classes on K\"ahler manifolds.
As this   approach relies
on delicate integration by parts, it is difficult to extend to the hermitian setting.

Blocki has provided a different approach in \cite{Blo05b} based on the Alexandroff-Bakelman-Pucci maximum principle
and a local stability estimate due to Cheng-Yau ($L^2$-case) and Kolodziej ($L^p$-case).
This has been pushed further by Szekelehydi in \cite{Szek18}. 
It requires the reference form $\omega$ to be strictly positive.

A PDE proof of the $L^{\infty}$-estimate has been very recently provided by 
Guo-Phong-Tong \cite{GPT21} using an auxiliary 
Monge-Amp\`ere equation, inspired by the recent breakthrough
by Chen-Cheng on constant scalar curvature metrics \cite{CC21}.


Our approach consists in showing that the   sublevel set $(\f<-t)$ becomes the empty set in finite time
by directly measuring its $\mu$-size. 
It only
uses weak compactness  of normalized $\omega$-plurisubharmonic functions
and basic properties of quasi-psh envelopes, allowing us to deal
with semi-positive forms.

\begin{ackn} 
This work has benefited from State aid managed
  by the ANR projects PARAPLUI
 and  ANR-11-LABX-0040. We thank Ahmed Zeriahi for carefully reading the paper and giving numerous useful comments. 
\end{ackn}


\section{Quasi-plurisubharmonic envelopes}

In the whole article we let $X$ denote a compact K\"ahler manifold of complex dimension $n \geq 1$.
We fix $\omega$ a smooth closed real $(1,1)$-form on $X$.

\subsection{Monge-Amp\`ere operators}

 \subsubsection{Quasi-plurisubharmonic functions}

  A function is quasi-plurisub\-harmonic if it is locally given as the sum of  a smooth and a psh function.   
 Quasi-psh functions
$\f:X \rightarrow \R \cup \{-\infty\}$ satisfying
$
\omega_{\f}:=\omega+dd^c \f \geq 0
$
in the weak sense of currents are called $\omega$-plurisubharmonic ($\omega$-psh for short).

\begin{defi}
We let $\PSH(X,\omega)$ denote the set of all $\omega$-plurisubharmonic functions which are not identically $-\infty$.  
\end{defi}

Constant functions are $\omega$-psh functions if (and only if) $\omega$ is semi-positive.
A ${\mathcal C}^2$-smooth function $u$ has bounded Hessian, hence $\e u$ is
$\omega$-psh if $0<\e$ is small enough and $\omega$ is  positive.
It is useful to consider as well the case when $\omega$ is not necessarily positive,
in order to study {\it big} cohomology classes (see section \ref{sec:big}).
 
\begin{defi}
A semi-positive closed $(1,1)$-form $\omega$ is big if
$V_{\omega}:=\int_X \omega^n>0$.
\end{defi}

The set $\PSH(X,\omega)$ is a closed subset of $L^1(X)$, 
for the $L^1$-topology.
Subsets of  $\omega$-psh functions enjoy strong compactness and integrability properties,
we mention notably the following: for any fixed $r \geq 1$, 
\begin{itemize}
\item $\PSH(X,\omega) \subset L^r(X)$;
 the induced $L^r$-topologies are equivalent;
\item $\PSH_A(X,\omega):=\{ u \in\PSH(X,\omega), \, -A \leq \sup_X u \leq 0 \}$ is compact in  $L^r$.
\end{itemize}
We refer the reader to \cite{Dem12,GZbook} for further basic properties of $\omega$-psh functions.

\subsubsection{Monge-Amp\`ere measure}

The complex Monge-Amp\`ere measure 
$$
 (\omega+dd^c u)^n=\omega_u^n
$$
 is well-defined for any
$\omega$-psh function $u$ which is {\it bounded}, as follows from Bedford-Taylor theory 
(see \cite{BT82} for the local theory, and \cite{GZbook} for the compact K\"ahler context).
 It also makes sense in the ample locus of
a big cohomology class \cite{BEGZ10}, as we shall briefly discuss in section \ref{sec:big}.

\smallskip

The mixed Monge-Amp\`ere measures 
$(\omega+dd^c u)^j \wedge (\omega+dd^c v)^{n-j}$ are also
well defined for any $0 \leq j \leq n$, and any bounded $\omega$-psh functions $u,v$.
We note for later use the following classical inequality: 

\begin{lem} \label{lem:Demkey}
Let $\f,\p$ be bounded $\omega$-psh functions such that $\f \leq \p$, then
$$
 1_{\{\p = \f \}} (\omega+dd^c \f)^j \wedge (\omega+dd^c \p)^{n-j} \leq 1_{\{\p = \f \}} (\omega+dd^c \p)^n,
$$
for all $1 \leq j \leq n$. 
\end{lem}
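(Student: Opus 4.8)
The plan is to reduce the mixed inequality to the case $j=1$ and then iterate. The key local fact is the following: if $\f \le \p$ are bounded $\omega$-psh functions, then on the contact set $\{\f = \p\}$ one has $dd^c \f \le dd^c \p$ in the sense of currents with measure coefficients, at least after restricting to that set. More precisely, I would first prove the pointwise comparison of the mixed Monge--Amp\`ere measures for $j=1$:
\begin{equation*}
1_{\{\f = \p\}}(\omega+dd^c\f)\wedge(\omega+dd^c\p)^{n-1} \le 1_{\{\f=\p\}}(\omega+dd^c\p)^n.
\end{equation*}
This is the heart of the matter. The standard way to see it is via Bedford--Taylor's result that $\max(\f,\p) = \p$ on $\{\f=\p\}$ combined with the fact that the non-pluripolar product localizes: for the function $w := \max(\f - \p, -\vep) + \p$ (which equals $\p$ where $\f \le \p - \vep$ and equals $\f$ near points where $\f = \p$, once $\vep$ is small), one has $(\omega+dd^c w)$ comparing to $(\omega+dd^c\p)$ on the relevant set. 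I would then let $\vep \to 0$. Alternatively, and perhaps more cleanly, invoke the local statement from Bedford--Taylor theory (cf.\ \cite{BT82}, \cite{GZbook}) that $1_{\{\f=\p\}}\,\omega_\f^n = 1_{\{\f=\p\}}\,\omega_\p^n$ for bounded potentials with $\f \le \p$ — this is already the case $j=n$ — and note the same argument applied to mixed products gives $1_{\{\f=\p\}}\,\omega_\f \wedge T = 1_{\{\f=\p\}}\,\omega_\p \wedge T$ for any bounded positive product $T = \omega_{u_1}\wedge\cdots\wedge\omega_{u_{n-1}}$, by working with regularizations and the local character of the contact set.

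Granting the $j=1$ case applied with the background current $T = \omega_\p^{n-1}$, I get $1_{\{\f=\p\}}\,\omega_\f\wedge\omega_\p^{n-1} = 1_{\{\f=\p\}}\,\omega_\p^n$. For general $j$ I would iterate: apply the $j=1$-type comparison $j$ times in succession to replace one factor of $\omega_\p$ by $\omega_\f$ at a time, using at each stage that the contact set $\{\f=\p\}$ is unchanged and that the currents involved remain bounded positive products so the localization principle applies. This yields
\begin{equation*}
1_{\{\f=\p\}}\,\omega_\f^{\,j}\wedge\omega_\p^{\,n-j} = 1_{\{\f=\p\}}\,\omega_\p^n,
\end{equation*}
which is in fact an equality, a priori stronger than the claimed inequality; the inequality as stated is what gets used downstream, so stating it in that form is harmless.

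The main obstacle is making the localization rigorous: Monge--Amp\`ere measures of bounded potentials do not charge pluripolar sets but the contact set $\{\f=\p\}$ is typically not pluripolar, so one genuinely needs the Bedford--Taylor plurifine-locality statement — namely that $\omega_u^n$ restricted to an open set in the plurifine topology depends only on $u$ there — rather than any soft argument. Once that principle is cited correctly, the comparison follows by writing $\f = \p$ on the plurifine-open set $\{\f > \p - \vep\} \cap \{\,\cdot\,\}$ modifications and passing to the limit $\vep \downarrow 0$ using that the mixed products are continuous along decreasing sequences of bounded $\omega$-psh functions. The combinatorial iteration step is then routine.
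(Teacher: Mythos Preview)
Your claim that one actually obtains an \emph{equality} $1_{\{\f=\p\}}\,\omega_\f^{\,j}\wedge\omega_\p^{\,n-j} = 1_{\{\f=\p\}}\,\omega_\p^n$ is false. Already for $n=j=1$, locally: with $\f = \max(\log|z|,0)$ and $\p = 2\max(\log|z|,0)$ on a disc of radius~$2$, one has $\f \le \p$ with contact set $\{|z|\le 1\}$, yet $dd^c\f$ and $dd^c\p$ are both supported on the unit circle with total masses $1$ and $2$ respectively. The slip in your argument is that on the plurifine-open set $\{\f > \p - \vep\}$ one does \emph{not} have $\f = \p$, only $\p - \vep < \f \le \p$; plurifine locality therefore identifies $\omega_\f$ with $\omega_{\max(\f,\p-\vep)}$ on that set, not with $\omega_\p$.

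The inequality, which is all that is needed, is what the paper proves---in one line---from the Bedford--Taylor maximum formula
\[
1_{\{\p \le \f\}}\,\omega_\f^n + 1_{\{\p > \f\}}\,\omega_\p^n \;\le\; (\omega + dd^c \max(\f,\p))^n .
\]
Since $\f \le \p$ forces $\max(\f,\p) = \p$ and $\{\p \le \f\} = \{\p = \f\}$, restricting to the contact set gives the case $j=n$ immediately; the mixed version of the same max inequality (with $\omega_\p^{\,n-j}$ carried along as a fixed positive factor) handles general $j$ directly, so no iteration is required. Your route can be salvaged for the inequality by setting $u_\vep = \max(\f,\p-\vep)$, noting that $u_\vep \le \p$ with $\{u_\vep = \p\} = \{\f = \p\}$ and that $\omega_{u_\vep}^n = \omega_\f^n$ on a plurifine neighbourhood of the contact set, then applying the already-known $j=n$ inequality to the pair $(u_\vep,\p)$; but this is strictly more work than the paper's direct appeal to the max formula.
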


\begin{proof}
To simplify notations we just  treat the case $j=n$.
It follows from Bedford-Taylor theory \cite{BT82} that for any bounded $\omega$-psh functions
$\f,\p$,
$$
1_{\{\p \leq \f \}}  \omega_{\f}^n+1_{\{\p>\f\}}  \omega_{\p}^n \leq (\omega+dd^c \max(\f,\p))^n
$$
When $\f \leq \p$ we infer
$
1_{\{\p = \f \}} \omega_{\f}^n \leq 1_{\{\p = \f \}} \omega_{\p}^n .
$
\end{proof}

 We shall also need the 
 following (see \cite[Proposition 10.11]{GZbook}):

  \begin{prop} \label{cor:domination}
  [Domination principle]
 If $u,v$ are bounded $\omega$-psh functions such that $u\geq v$ a.e. with respect to  $\omega_u^n$. Then $u\geq v$. 
 \end{prop}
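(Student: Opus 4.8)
The statement is a classical Calabi-type fact (see \cite[Proposition 10.11]{GZbook}), and the plan is to reduce it to the uniqueness of bounded Monge-Amp\`ere potentials. Set $w:=\max(u,v)$, a bounded $\omega$-psh function with $w\ge u$ and $\{w=u\}=\{u\ge v\}$. The hypothesis means precisely that $\omega_u^n$ gives no mass to $\{u<v\}$, so $\omega_u^n=1_{\{u\ge v\}}\omega_u^n=1_{\{w=u\}}\omega_u^n$; Lemma \ref{lem:Demkey} (with $j=n$, applied to $u\le w$) then gives
\[
\omega_u^n \;=\; 1_{\{w=u\}}\,\omega_u^n \;\le\; 1_{\{w=u\}}\,\omega_w^n \;\le\; \omega_w^n .
\]
Since $\omega_u^n$ and $\omega_w^n$ are positive measures with the same finite total mass $\int_X\omega^n=V$ — the total Monge-Amp\`ere mass of any bounded $\omega$-psh function — this forces $\omega_w^n=\omega_u^n$.

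At this point I would invoke uniqueness of bounded $\omega$-psh solutions of the complex Monge-Amp\`ere equation (see \cite{GZbook}), which yields $w=u+c$ for a constant $c\ge 0$. If $c>0$ then $u<w=\max(u,v)$ everywhere, hence $v>u$ everywhere, so $\{u<v\}=X$ carries positive $\omega_u^n$-mass (namely $V>0$), contradicting the hypothesis. Therefore $c=0$, that is $\max(u,v)=u$, which is exactly the conclusion $u\ge v$.

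The only substantial ingredient is the uniqueness statement, and that is where the real work sits: writing $h=w-u\ge 0$ and $T=\sum_{k=0}^{n-1}\omega_w^k\wedge\omega_u^{n-1-k}$, the identity $0=\omega_w^n-\omega_u^n=dd^c h\wedge T$ combined with an integration by parts gives $\int_X dh\wedge d^c h\wedge T=0$, whence $\int_X dh\wedge d^c h\wedge\omega_w^k\wedge\omega_u^{n-1-k}=0$ for every $k$, and one bootstraps this to $h\equiv\mathrm{const}$. Justifying the integration by parts and this bootstrap when $\omega$ is merely semi-positive (so the forms involved may degenerate) is the delicate point; by contrast Lemma \ref{lem:Demkey} together with the constancy of total Monge-Amp\`ere mass makes the reduction above completely routine.
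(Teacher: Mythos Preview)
The paper does not prove this proposition; it merely states it and refers to \cite[Proposition 10.11]{GZbook}. Your argument is correct: the chain $\omega_u^n=1_{\{w=u\}}\omega_u^n\le 1_{\{w=u\}}\omega_w^n\le\omega_w^n$ via Lemma~\ref{lem:Demkey}, combined with equality of total masses $V$, gives $\omega_w^n=\omega_u^n$, and the contradiction from $c>0$ is clean.

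Two remarks on the comparison. First, the standard route (as in \cite{GZbook}) is different and lighter: rather than invoking uniqueness, one perturbs $v$ to $v_\epsilon=(1-\epsilon)v+\epsilon\psi$ for an arbitrary bounded $\omega$-psh $\psi\le v$ and uses the comparison inequality $\int_{\{u<v_\epsilon\}}\omega_{v_\epsilon}^n\le\int_{\{u<v_\epsilon\}}\omega_u^n$ (itself a direct consequence of the same Bedford--Taylor maximum inequality behind Lemma~\ref{lem:Demkey}) to deduce $\omega_\psi^n(\{u<v\})=0$ for \emph{every} such $\psi$; this forces $\{u<v\}$ to be negligible and hence empty, with no integration by parts required. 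Second, watch for circularity: in \cite{GZbook} and several standard treatments, uniqueness of bounded solutions is established \emph{after} (and in places \emph{using}) the domination/comparison principle, so simply citing \cite{GZbook} for uniqueness while proving domination risks a loop. Your integration-by-parts sketch is the right way to break that loop and works when $\omega$ is K\"ahler; in the semi-positive big case one typically approximates by $\omega+\epsilon\,\omega_X$ and passes to the limit, which is precisely the delicate point you flag.
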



\subsection{Envelopes} \label{sec:envelopes}

Upper envelopes of (pluri)subharmonic functions are classical objects in Potential Theory. 
They  were considered   by  Bedford and Taylor to solve the Dirichlet problem for 
the complex Monge-Amp\`ere equation in strictly pseudo-convex domains \cite{BT76}.
We consider here envelopes of $\omega$-psh functions.

\subsubsection{Basic properties}


\begin{defi} \label{def:usual}
Given a Lebesgue measurable function $h:X \rightarrow \R$, we define the $\omega$-psh envelope of $h$ by
$$
P_{\omega}(h) := \left(\sup \{ u \in \psh (X,\omega) ; u \leq h  \, \, \text{in} \, \, X\}\right)^*,
$$
where the star means that we take the upper semi-continuous regularization. 
\end{defi}


\noindent The following  is a combination of \cite[Propositions  2.2 and 2.5, Lemma 2.3]{GLZ19}:

 \begin{prop} \label{pro:orthog}
If $h$ is bounded from below and quasi-continuous, then
\begin{itemize}
\item $P_{\omega}(h)$ is a bounded $\omega$-plurisubharmonic function;
\item $P_{\omega}(h) \leq h$ in $X \setminus P$, where $P$ is pluripolar;
\item $(\omega+dd^c P_{\omega}(h))^n$ is concentrated on the contact $\{ P_{\omega}(h)=h\}$.
\end{itemize}
\end{prop}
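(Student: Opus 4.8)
The plan is to prove each of the three bullet points in \cite[Propositions 2.2 and 2.5, Lemma 2.3]{GLZ19} by elementary arguments, relying on the standard compactness and regularization machinery for $\omega$-psh functions.

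\smallskip

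\textbf{Boundedness and $\omega$-plurisubharmonicity.} First I would observe that the family $\mathcal{F}:=\{u \in \PSH(X,\omega): u \leq h \text{ in } X\}$ is nonempty: since $h$ is bounded below, say $h \geq -C$, and $\omega$ is (in the relevant applications) semi-positive so that constants are $\omega$-psh, the constant $-C$ belongs to $\mathcal{F}$ (in the general case one shifts a fixed $\omega$-psh function down far enough, using that $h$ is bounded below). Taking the usc regularization of $\sup\mathcal{F}$, a classical result (Brelot--Cartan type lemma, see \cite{GZbook}) shows that $P_{\omega}(h)$ is again $\omega$-psh, or identically $-\infty$ — but the latter is excluded by the constant lower bound. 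For the upper bound, note every $u\in\mathcal F$ satisfies $\sup_X u \leq \sup_X h <\infty$ if $h$ is bounded above; when $h$ is only assumed bounded below and quasi-continuous one still gets $\sup_X P_\omega(h)<\infty$ because $\PSH_A(X,\omega)$ controls the sup and $P_\omega(h)\le h$ a.e. keeps $\int P_\omega(h)\,dV_X$ from blowing up. Hence $P_\omega(h)$ is a bounded $\omega$-psh function.

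\smallskip

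\textbf{The inequality $P_\omega(h)\le h$ off a pluripolar set.} Each competitor $u\in\mathcal F$ satisfies $u\le h$ everywhere, so the non-regularized supremum $\tilde u:=\sup\mathcal F$ satisfies $\tilde u\le h$ everywhere. By the Brelot--Cartan lemma, $\{P_\omega(h)>\tilde u\}$ is pluripolar, hence so is $P:=\{P_\omega(h)>h\}\subset\{P_\omega(h)>\tilde u\}$. This is where quasi-continuity of $h$ is used: quasi-continuity guarantees the negligible set is genuinely pluripolar rather than merely Lebesgue-null, which is what makes the orthogonality relation in the third bullet usable against Monge--Amp\`ere measures of bounded $\omega$-psh functions (these do not charge pluripolar sets).

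\smallskip

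\textbf{Orthogonality: $\omega_{P_\omega(h)}^n$ lives on $\{P_\omega(h)=h\}$.} This is the substantive point and the expected main obstacle. Write $\varphi:=P_\omega(h)$ and fix an open set $U\Subset\{\varphi<h\}$ — more precisely, since $h$ need not be usc one argues with sets where $\varphi$ lies strictly below a continuous minorant of $h$; after a routine reduction one may assume $h$ continuous by approximation. I would show $\omega_\varphi^n=0$ on $U$ by the classical perturbation/balayage argument: if not, one can add to $\varphi$ on $U$ a small bump — solve locally, or simply take $\max(\varphi, \varphi+\e\chi)$ for a suitable $\e>0$ and a local $\omega$-psh perturbation $\chi$ supported near a point where $\omega_\varphi^n$ concentrates — producing a function still $\le h$ (because the perturbation is small and $\varphi<h$ there) and still $\omega$-psh, but strictly larger than $\varphi$ at some point, contradicting maximality of $\varphi=P_\omega(h)$. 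Executing this rigorously requires the comparison principle and Bedford--Taylor's solvability of the local Dirichlet problem; the delicate part is handling the fact that $h$ is only quasi-continuous, which forces one to work on sets of the form $\{\varphi < h - \delta\}$ up to a set of small capacity and then let $\delta\to 0$. Since the statement merely quotes \cite{GLZ19}, I would in fact present the proof as a citation with a one-line indication of the balayage idea, rather than reproving it in full.
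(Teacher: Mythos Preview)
The paper does not give a proof of this proposition at all: it simply cites \cite[Propositions~2.2 and~2.5, Lemma~2.3]{GLZ19}. Your final sentence already anticipates this, and indeed the right move is exactly what you say there---cite \cite{GLZ19} and move on.

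That said, your sketch of the underlying argument is broadly on track but has two small inaccuracies worth flagging. First, your justification of the upper bound on $P_\omega(h)$ is roundabout: on a \emph{compact} manifold any $\omega$-psh function is upper semicontinuous, hence bounded above, so $\sup_X P_\omega(h)<+\infty$ is automatic once you know $P_\omega(h)\in\PSH(X,\omega)$; no integrability argument is needed. Second, you locate the use of quasi-continuity in the second bullet, but that step only needs the Choquet--Brelot--Cartan fact that $\{(\sup\mathcal F)^*>\sup\mathcal F\}$ is pluripolar, which holds regardless of $h$. Quasi-continuity of $h$ is really used in the third bullet: it is what lets you treat the sublevel sets $\{P_\omega(h)<h-\delta\}$ as quasi-open and run the balayage/local Dirichlet argument rigorously (and, in the end, identify the contact set up to a pluripolar set, which $\omega_{P_\omega(h)}^n$ does not charge).
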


Recall that a function $h$ is quasi-continuous if for any $\e>0$,
there exists an open set $G$ of capacity smaller than $\e$ such that
$h$ is continuous in $X \setminus G$. Quasi-psh functions are quasi-continuous (see \cite{BT82}),
as well as differences of the latter: we shall use this fact during the proof of Theorem \ref{thm:relative}.

 When $h$ is ${\mathcal C}^{1,1}$-smooth,   so is $P_{\omega}(h)$ 
 \cite{Ber19,CZ19} and one can further has
 \begin{equation} \label{eq:MAenv}
(\omega+dd^c P_{\omega}(h))^n=1_{\{ P_{\omega}(h)=h\}} (\omega+dd^c h)^n.
 \end{equation}

 \subsubsection{A key lemma}
 
 The following is a key technical tool to our new approach:
 
 \begin{lem} \label{lem:GLkey}
 Fix $\chi:\R^- \rightarrow \R^-$ a concave increasing function such that $\chi'(0) \geq 1$. 
  Let $\f,\phi$ be bounded $\omega$-psh functions with $\f \leq \phi$. If $\p=\phi+\chi \circ (\f-\phi)$
 then
$$
  (\omega+dd^c P_{\omega}(\p))^n\leq 1_{\{ P_{\omega}(\p)=\p\}} (\chi' \circ (\f-\phi))^n (\omega+dd^c \f)^n.
 $$
 \end{lem}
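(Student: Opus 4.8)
The plan is to prove the estimate first when $\f,\phi$ and $\chi$ are smooth, where all the relevant forms are defined pointwise, and then to reach the general case by regularization.

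\textbf{Smooth case.} Suppose $\f,\phi$ are smooth with $\omega_\f,\omega_\phi\ge0$, $\f\le\phi$, and $\chi\in{\mathcal C}^2$. Put $s:=\f-\phi\le0$; then $\p=\phi+\chi\circ s$ is smooth, hence continuous, quasi-continuous and bounded, so Proposition~\ref{pro:orthog} applies: $v:=P_{\omega}(\p)$ is a bounded $\omega$-psh function with $v\le\p$, and, since $\p$ is ${\mathcal C}^{1,1}$, identity~\eqref{eq:MAenv} gives $(\omega+dd^c v)^n=1_{\{v=\p\}}(\omega+dd^c\p)^n$. Now
\[
dd^c\p=dd^c\phi+(\chi'\circ s)\,(dd^c\f-dd^c\phi)+(\chi''\circ s)\,ds\wedge d^c s ,
\]
and the last term, being $(\chi''\circ s)$ times the non-negative form $ds\wedge d^c s$, is a non-positive $(1,1)$-form because $\chi$ is concave. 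Writing $a:=\chi'\circ s$, and noting $a\ge\chi'(0)\ge1$ since $s\le0$ and $\chi'$ is decreasing, we obtain, using $\omega_\phi\ge0$,
\[
\omega+dd^c\p\ \le\ (\omega+dd^c\phi)+a\,(dd^c\f-dd^c\phi)\ =\ a\,\omega_\f-(a-1)\,\omega_\phi\ \le\ a\,\omega_\f .
\]
On $\{v=\p\}$ the smooth function $\p$ dominates the $\omega$-psh function $v$ and equals it, so the complex Hessian of $\p$ forces $\omega+dd^c\p\ge0$ at such points; hence there $0\le\omega+dd^c\p\le a\,\omega_\f$, and comparing determinants of non-negative $(1,1)$-forms gives $(\omega+dd^c\p)^n\le a^n\,\omega_\f^n=(\chi'\circ s)^n\,\omega_\f^n$ on $\{v=\p\}$. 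Combined with \eqref{eq:MAenv}, this is the assertion.

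\textbf{General $\chi$, smooth $\f,\phi$.} Approximate $\chi$ by smooth concave increasing functions $\chi_k$ with $\chi_k'(0)\ge1$, $\chi_k\to\chi$ uniformly on the compact interval $(\f-\phi)(X)$, and $\limsup_k\chi_k'\le\chi'$ pointwise (with the left-derivative convention at the at most countably many non-differentiability points of $\chi$). Then $\p_k:=\phi+\chi_k\circ(\f-\phi)\to\p$ uniformly, so $P_{\omega}(\p_k)\to v$ uniformly and $(\omega+dd^c P_{\omega}(\p_k))^n\to(\omega+dd^c v)^n$ weakly. By the smooth case, discarding the contact indicator, $(\omega+dd^c P_{\omega}(\p_k))^n\le(\chi_k'\circ(\f-\phi))^n\,\omega_\f^n$; passing to the limit — weak convergence on the left, the reverse Fatou lemma on the right (the densities being uniformly bounded) — yields $(\omega+dd^c v)^n\le(\chi'\circ(\f-\phi))^n\,\omega_\f^n$. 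The indicator $1_{\{v=\p\}}$ is then reinstated using the orthogonality statement in Proposition~\ref{pro:orthog}, which needs no regularity.

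\textbf{General $\f,\phi$.} Finally one removes smoothness of $\f,\phi$ by Demailly regularization: pick smooth $\f_j\downarrow\f$ and $\phi_j\downarrow\phi$ that are $\omega_j$-psh for some $\omega_j=\omega+\varepsilon_j\omega_X\downarrow\omega$, and — replacing the majorant by a smooth $\omega_j$-psh function above a regularized maximum — arrange the ordering $\f_j\le\phi_j$; then apply the previous step with $\omega_j$ and let $j\to\infty$. \emph{This limiting step is the delicate point}: the regularizing sequences converge only in capacity, not uniformly, so one must invoke the stability of the envelope operator $P_{\omega}$ and the continuity of the complex Monge-Amp\`ere operator along uniformly bounded, capacity-convergent families, together with a Fatou-type control of the right-hand side; as in the previous step, the contact indicator is recovered only at the very end, from Proposition~\ref{pro:orthog}.
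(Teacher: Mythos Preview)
Your smooth case is correct and coincides with the paper's first computation. The gap is in your Step~3: you flag the passage to general bounded $\f,\phi$ as ``the delicate point'' and then do not carry it out, and the obstacles are real. Even granting $\f_j\le\phi_j$ and capacity convergence, there is no general semicontinuity forcing
\[
\limsup_j\ (\chi'\circ(\f_j-\phi_j))^n(\omega_j+dd^c\f_j)^n\ \le\ (\chi'\circ(\f-\phi))^n(\omega+dd^c\f)^n
\]
weakly: weak convergence of Monge--Amp\`ere measures combined with pointwise (or capacity) convergence of bounded coefficients does not control the product in the needed direction. The convergence $P_{\omega_j}(\p_j)\to P_\omega(\p)$ on the left is likewise not automatic, since $\p_j=\phi_j+\chi\circ(\f_j-\phi_j)$ only converges in capacity and the envelope is not known to be stable under such limits.

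The paper avoids regularizing $\f,\phi$ altogether by an inverse-function trick that works directly for bounded $\omega$-psh functions. Set $\tau:=\chi^{-1}$, convex increasing with $\tau'\le1$, and form $v:=\phi+\tau\circ(P_\omega(\p)-\phi)$. Convexity of $\tau$ and $\tau'\le1$ give, in the sense of currents,
\[
\omega+dd^c v\ \ge\ \tau'\circ(P_\omega(\p)-\phi)\,(\omega+dd^c P_\omega(\p))\ \ge\ 0,
\]
so $v$ is genuinely $\omega$-psh and $\omega_{P_\omega(\p)}^n\le(\tau'\circ(P_\omega(\p)-\phi))^{-n}\omega_v^n$. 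On the contact set $\{P_\omega(\p)=\p\}$ one has $\tau'\circ(P_\omega(\p)-\phi)=(\chi'\circ(\f-\phi))^{-1}$ and $v=\f$, while $v\le\f$ everywhere; Lemma~\ref{lem:Demkey} then yields $1_{\{v=\f\}}\omega_v^n\le1_{\{v=\f\}}\omega_\f^n$, and the concentration on the contact set comes from Proposition~\ref{pro:orthog}. This is the missing idea in your Step~3: no limit in $\f,\phi$ is needed.
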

 
 \begin{proof}
 Using that $\chi'' \leq 0$ and $\chi' \geq 1$, we observe that 
 \begin{eqnarray*}
\omega+dd^c \p &= &  \omega_{\phi} + \chi' \circ (\f-\phi) (\omega_\f -\omega_{\phi}) +\chi'' \circ (\f-\phi) d(\f-\phi) \wedge d^c( \f -\phi) \\
&\leq & \chi' \circ (\f-\phi) \omega_\f +[1-\chi' \circ (\f-\phi)] \omega_{\phi} 
\leq  \chi' \circ (\f-\phi) \omega_\f.
 \end{eqnarray*}

 When $\f,\phi$ and $\chi$ are ${\mathcal C}^{1,1}$-smooth, we can invoke \eqref{eq:MAenv} to conclude that
 $$
   (\omega+dd^c P_{\omega}(\p))^n = 1_{\{ P_{\omega}(\p)=\p\}} \omega_{\p}^n     \leq  1_{\{ P_{\omega}(\p)=\p\}} (\chi' \circ (\f-\phi))^n \omega_{\f}^n.
 $$
   The last inequality follows   from   $\omega+dd^c \p  \leq \chi' \circ (\f-\phi) \omega_\f$
 and the fact that $\p$ is $\omega$-psh on  $\{ P_{\omega}(\p)=\p\}$,
 where these inequalities can be interpreted pointwise.
 
 When these functions are less regular we take a different route.
 We set $\tau=\chi^{-1}:\R^- \rightarrow \R^-$. This is a convex increasing function such that 
 $\tau'=(\chi' \circ \tau)^{-1} \leq 1$.
 Set $\rho=P_{\omega}(\p)-\phi$.
 The function $v=\phi+\tau \circ (P_{\omega}(\p)-\phi)$ is $\omega$-psh with
 \begin{eqnarray*}
\omega+  dd^c v & =& \omega_{\phi}+\tau'' \circ \rho  \, d \rho \wedge d^c \rho
  +\tau' \circ \rho \,  dd^c (P_{\omega}(\p)-\phi) \\
  &\geq& [1- \tau' \circ \rho] \omega_{\phi}+\tau' \circ \rho \,  (\omega+dd^c  P_{\omega}(\p))  \\
  &\geq & \tau' \circ \rho \,  (\omega+dd^c  P_{\omega}(\p)).
 \end{eqnarray*}
Thus
$
\omega_{P_{\omega}(\p)}^n \leq  1_{\{ P_{\omega}(\p)=\p\}}  (\tau' \circ (P_{\omega}(\p)-\phi))^{-n} \omega_v^n.
$
On 
$\{ P_{\omega}(\p)=\p\}$  we get
$$
\tau' \circ (P_{\omega}(\p)-\phi) = \tau' \circ (\p-\phi)=[\chi' \circ (\f-\phi)]^{-1}.
$$
Now $v \leq \phi+ \tau \circ (\p-\phi)=\f$ on $X$, with equality 
on   the contact set $\{ P_{\omega}(\p)=\p\}$. 
It follows therefore from Lemma \ref{lem:Demkey} that
  $\omega_v^n \leq \omega_{\f}^n$ on  $\{ P_{\omega}(\p)=\p\}$.
 \end{proof}

\section{Global $L^{\infty}$ bounds}

In this section we prove Theorem A, as well as a stability estimate.

\subsection{Measures which integrate quasi-plurisubharmonic functions}

   \begin{theorem}  \label{thm:uniform1}
   Let $\omega$ be semi-positive and big.
   Let $\mu$ be a probability measure  such that
  $\PSH(X,\omega) \subset L^m(\mu)$ for some $m>n$.
Any solution $\f \in\PSH(X,\omega) \cap L^{\infty}(X)$  to 
 $V^{-1}(\omega+dd^c \f)^n=\mu$  satisfies
 $$
 {\rm Osc}_X (\f) \leq  T_{\mu}
 $$
 for some uniform constant $T_{\mu}=T(A_m(\mu))$ which depends on  an upper bound on
 $$
A_m({\mu}):= \sup \left\{ \left(\int_X (-\p)^m d\mu\right)^{\frac{1}{m}}, \; \p \in \PSH(X,\omega) \text{ with } \sup_X \p=0 \right\}.
 $$
 \end{theorem}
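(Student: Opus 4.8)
The plan is to normalize $\sup_X\f=0$ and to prove that the open sublevel sets $\Om_t:=\{\f<-t\}$ become empty once $t$ exceeds an explicit $T=T(A_m(\mu))$; since $\sup_X\f=0$ this is exactly the asserted bound on $\mathrm{Osc}_X(\f)$. The first reduction is that it is enough to show that the single scalar quantity $g(t_0):=\mu(\Om_{t_0})$ vanishes for some finite $t_0=T(A_m(\mu))$. Indeed, if $g(t_0)=0$ then, since $\omega_\f^n=V\mu$, the measure $\omega_\f^n$ puts no mass on the open set $\{\f<-t_0\}=\{\f<\max(\f,-t_0)\}$, so $\f\geq\max(\f,-t_0)$ holds $\omega_\f^n$-almost everywhere; applying the domination principle (Proposition \ref{cor:domination}) to the bounded $\omega$-psh functions $\f$ and $\max(\f,-t_0)$ gives $\f\geq\max(\f,-t_0)\geq-t_0$ on all of $X$.

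The measure hypothesis enters first through Chebyshev's inequality: as $\f$ is a normalized $\omega$-psh function,
$$
g(t)=\mu(\{-\f>t\})\ \leq\ t^{-m}\int_X(-\f)^m\,d\mu\ \leq\ \bigl(A_m(\mu)/t\bigr)^m ,
$$
so $g(s_0)$ can be made as small as needed by taking $s_0=K\,A_m(\mu)$ with $K$ a constant fixed at the end. It enters a second, quantitative time via a super-linear recursion of the shape
$$
g(s+t)\ \leq\ C\,t^{-a}\,g(s)^{\,b}\qquad(s\geq0,\ 0<t\leq1),
$$
with $a>0$, $b=m/n>1$ and $C=C(n,\omega,A_m(\mu))$ — the exponent $b>1$ being precisely where the assumption $m>n$ is used. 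Because $b>1$, this is the input of the classical De Giorgi iteration lemma: with $s_k:=s_0+1-2^{-k}$ one checks inductively that $g(s_k)\to0$, and in fact $g(s_0+1)=0$, once $g(s_0)$ is below the threshold fixed by $C,a,b$ — which holds for $K$ large. Hence $T_\mu$ is bounded in terms of $A_m(\mu)$, as claimed.

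The recursion is where Lemma \ref{lem:GLkey} substitutes for the two pluripotential ingredients of Kolodziej's method, namely the capacity bound $t^{n}\mathrm{Cap}_\omega(\Om_{s+t})\lesssim\int_{\Om_s}\omega_\f^n$ and the capacity–volume comparison $\mu(\Om_u)\lesssim\mathrm{Cap}_\omega(\Om_u)^{m/n}$. The mechanism: for a bounded $\omega$-psh $\phi\geq\f$ — typically the truncation $\phi=\max(\f,-s)$, so $\f-\phi=\min(\f+s,0)$ — and a concave increasing weight $\chi:\R^-\to\R^-$ with $\chi'(0)\geq1$, the function $v:=P_\omega(\p)$, $\p:=\phi+\chi\circ(\f-\phi)$, is bounded $\omega$-psh and, by Lemma \ref{lem:GLkey},
$$
(\omega+dd^c v)^n\ \leq\ 1_{\{v=\p\}}\bigl(\chi'\circ(\f-\phi)\bigr)^n(\omega+dd^c\f)^n\ =\ 1_{\{v=\p\}}\bigl(\chi'\circ(\f-\phi)\bigr)^n\,V\,d\mu ,
$$
while $\int_X(\omega+dd^c v)^n=V$ by Stokes and this total mass is carried by the contact set $\{v=\p\}$ (Proposition \ref{pro:orthog}). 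Taking $\chi$ piecewise-linear with a slope jump at $-t$ (slope $1$ near $0$, slope $1/t$ below) makes $\chi'\circ(\f-\phi)$ equal to $1$ off $\Om_{s+t}$ and $1/t$ on $\Om_{s+t}$, where $\p$ is very negative; comparing this localized Monge–Ampère inequality with the equation $\omega_\f^n=V\mu$ — using Lemma \ref{lem:Demkey} on the region where $v$ coincides with $\f$ — bounds $g(s+t)$ by $t^{-n}$ times the $\mu$-size of a contact set contained in $\Om_s$ (the analogue of the capacity bound), and then applying the integrability hypothesis to the normalized envelope $v-\sup_X v$, which is very negative on that contact set, turns this $\mu$-size into the super-linear term $C\,g(s)^{m/n}$ (the analogue of the capacity–volume comparison).

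The main difficulty I foresee is this last step. Once the Monge–Ampère capacity is gone, one has to quantify how small the contact set $\{v=\p\}$ can be, equivalently to give a lower bound for $\sup_X v$ — that is, to control how far the envelope $P_\omega(\p)$ is pushed down by the constraint of lying below the very negative function $\p$ on $\Om_{s+t}$. Obtaining this estimate purely from the envelope and orthogonality properties of Section \ref{sec:envelopes} together with the $L^m(\mu)$-integrability of $\omega$-psh functions, instead of via an Alexander–Taylor type capacity–volume comparison, is the delicate technical heart of the argument.
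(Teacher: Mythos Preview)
Your scheme is a De Giorgi iteration in the spirit of Ko{\l}odziej, with the envelope of Lemma~\ref{lem:GLkey} standing in for the Monge--Amp\`ere capacity; this is \emph{not} what the paper does, and the difficulty you flag at the end is a genuine gap that you have not closed. Concretely: with your piecewise-linear $\chi$ (slope $1$ on $[-t,0]$, slope $1/t$ below, $0<t\le1$) and $\phi=\max(\varphi,-s)$, Lemma~\ref{lem:GLkey} only yields the \emph{upper} bound $\omega_v^n\le 1_{\{v=\psi\}}(\chi'\circ(\varphi-\phi))^n V\mu$; combined with $\int_X\omega_v^n=V$ this gives $\mu(\{v=\psi\})\ge t^n$, which is the wrong direction. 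The claim that this ``bounds $g(s+t)$ by $t^{-n}$ times the $\mu$-size of a contact set contained in $\Omega_s$'' does not follow from anything you have written, and the subsequent step (turning a contact-set $\mu$-mass into $g(s)^{m/n}$ via the $L^m$ bound on $v-\sup_X v$) needs precisely the lower bound on $\sup_X v$ that you leave open. In the classical proof that bound is supplied by the Alexander--Taylor capacity--volume comparison; you have removed the capacity but not replaced this ingredient, so the recursion $g(s+t)\le C\,t^{-a}g(s)^{m/n}$ is unproved.

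The paper avoids iteration altogether. It uses a \emph{single} concave weight $\chi$, but one chosen \emph{data-dependently}: with $g(t):=(\chi'(-t))^{n+2\varepsilon}$ one imposes $g'(t)=\bigl[(1+t)^2\mu(\varphi<-t)\bigr]^{-1}$ on $[0,T_0]$, which forces $\int_X(\chi'\circ\varphi)^{n+2\varepsilon}\,d\mu\le2$. With this integrability in hand, the lower bound on $\sup_X u$ for $u=P_\omega(\chi\circ\varphi)$ follows from a one-line energy estimate: using $|\chi(t)|\le|t|\chi'(t)$ and H\"older one gets $(-\sup_X u)^{\varepsilon}\le\int_X(-u)^{\varepsilon}\,V^{-1}\omega_u^n\le 2A_m(\mu)^{\varepsilon}$. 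This is exactly the missing control on the envelope that your scheme lacks, obtained essentially for free once $\chi$ has absorbed the unknown sublevel function $t\mapsto\mu(\varphi<-t)$. Chebyshev then gives $\mu(\varphi<-t)\le\tilde A\,|\chi(-t)|^{-m}$, which fed back into the defining relation for $\chi$ becomes the differential inequality $h^m\le C(1+t)^2 h''(h')^{n+2\varepsilon-1}$ for $h(t)=-\chi(-t)$; since $m+1>n+2\varepsilon+1$, integrating this bounds $T_0$ uniformly. So the idea you are missing is not a sharper iteration step, but letting $\chi$ swallow the entire function $t\mapsto\mu(\varphi<-t)$ in one stroke.
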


  
 
 Let us stress that this   result is not new: it can be derived
 from the celebrated a priori estimate of Kolodziej \cite{Kol98},
 together with its extensions \cite{EGZ09,EGZ08,DP10}.
We provide here 
 an elementary proof that does not use the theory of Monge-Amp\`ere capacities,
 and merely relies on 
 the compactness properties of sup-normalized $\omega$-psh functions
 and Lemma \ref{lem:GLkey}.

 \begin{proof}
 Shifting by an additive constant, we normalize $\f$ by $\sup_X \f=0$. Set 
 \[
 T_{\max}:= \sup \{t>0 \; : \; \mu(\f <-t) >0\}. 
 \]
 Our goal is to establish a precise bound on $T_{max}$.
    By definition,  $-T_{max} \leq \f$ almost everywhere with respect to $\mu$, 
    hence everywhere by the domination principle (Proposition \ref{cor:domination}), providing the desired a priori bound
    ${\rm Osc}_X (\f) \leq T_{max}$.

%

 We let $\chi:\R^- \rightarrow \R^-$ denote a {\it concave} increasing function
 such that $\chi(0)=0$ and $\chi'(0) = 1$. 
 We set $\p=\chi \circ \f$, $u=P_{\omega}(\p)$ and observe that
$$
\omega+dd^c \p = \chi' \circ \f \, \omega_\f+[1-\chi' \circ \f] \, \omega+\chi'' \circ \f \, d\f \wedge d^c \f 
\leq  \chi' \circ \f \, \omega_\f.
$$
It follows from Lemma \ref{lem:GLkey} that
 $$
MA(u):=\frac{1}{V} (\omega+dd^c u)^n  
 \leq 1_{\{u=\p\}}  (\chi' \circ \f)^n \mu.
 $$

 \smallskip
 
\noindent  {\it Controlling the energy of $u$}.
We fix $\e>0$ so  that 
$n<n+3\e =m.$
 The concavity of $\chi$ and the normalization $\chi(0)=0$ yields $|\chi(t)| \leq |t| \chi'(t)$. 
 Since $u=\chi \circ \f$ on  
 the contact set $\{P_{\omega}(\p)=\p\}$,  H\"older inequality yields
 \begin{eqnarray*}
 \int_X (-u)^{\e} MA(u)
 &\leq &  \int_X (-\chi \circ \f)^{\e} (\chi' \circ \f)^n d\mu   
 \leq   \int_X (- \f)^{\e} (\chi' \circ \f)^{n+\e} d\mu   \\
 & \leq &  \left( \int_X (-\f)^{n+2\e} d\mu \right)^{\frac{\e}{n+2\e}} 
 \left( \int_X ( \chi' \circ \f)^{n+2\e} d\mu \right)^{\frac{n+\e}{n+2\e}} \\
 &=& A_m({\mu})^{\e} \left( \int_X ( \chi' \circ \f)^{n+2\e} d\mu \right)^{\frac{n+\e}{n+2\e}} 
 \end{eqnarray*}
  using that $\f$ belongs to the set of $\omega$-psh functions $v$  normalized by $\sup_X v=0$ 
 which is compact in $L^{n+2\e}(\mu)$, and observing
 that $A_{n+2\e}({\mu}) \leq A_m({\mu})$.
 
  \smallskip
 
\noindent  {\it Controlling the norms $||u||_{L^m}$}.
We are going to choose below the weight $\chi$ in such a way that
$\int_X ( \chi' \circ \f)^{n+2\e} d\mu =B \leq 2$ is a finite constant under control.
This provides a uniform lower bound on $\sup_X u$ as we now explain:
indeed
$$
0 \leq  (-\sup_X u)^{\e}   =(-\sup_X u)^{\e} \int_X  MA(u)  
\leq \int_X (-u)^{\e} MA(u)  \leq 2 A_m({\mu})^{\e}
$$
 yields
 $
  -2^{\frac{1}{\e}} A_m({\mu}) \leq \sup_X u \leq 0.
  $
We infer that  $u$ belongs to a compact set of $\omega$-psh functions, hence its norm
 $||u||_{L^m(\mu)}$ is under control with
 $$
 ||u||_{L^m(\mu)} \leq A_m(\mu)+2^{\frac{1}{\e}} A_m({\mu}) \leq [1+2^{\frac{1}{\e}}] A_m({\mu}).
 $$
 Since $u \leq \chi \circ \f \leq 0$
we infer $||\chi \circ \f||_{L^m} \leq ||u||_{L^m}$.   Chebyshev inequality thus yields
\begin{equation} \label{eq:clef}
 {\mu }(\f<-t) \leq \frac{\tilde{A}}{|\chi|^m(-t)},
 \; \; \text{ where } \; \;
 \tilde{A}=[1+2^{\frac{1}{\e}}] A_m({\mu}).
\end{equation}

   \smallskip
 
\noindent  {\it Choice of $\chi$}.
 Lebesgue's formula ensures that if $g: \R^+ \rightarrow \R^+$  is an increasing function such that $g(0)=1$, then
$$
\int_X g \circ (-\f) d\mu = \mu(X) + \int_0^{T_{\max}} g'(t) {\mu }(\f<-t) dt.
$$
Fix $0<T_0<T_{\max}$. 
Setting $g(t)=[\chi' (-t)]^{n+2\e}$  we define $\chi$ by imposing $\chi(0)=0$, $\chi'(0)=1$, and 
$$
g'(t)=
\begin{cases}
	\dfrac{1}{(1+t)^2 {\mu }(\f<-t)}, \; \text{if} \; t\leq T_0\\
	\; 
	\\	\frac{1}{(1+t)^2} \; \; \; \; \text{ if}\;  t >  T_0
\end{cases}.
$$
This choice guarantees that $\chi: \mathbb{R}^- \rightarrow \mathbb{R}^-$ is concave increasing with $\chi' \geq 1$, and
$$
\int_X ( \chi' \circ \f)^{n+2\e} d\mu \leq  \mu(X) + \int_0^{+\infty} \frac{dt}{(1+t)^2} =2.
$$

\smallskip

 \noindent  {\it Conclusion}.  We set $h(t)=-\chi(-t)$ and work with the positive counterpart of $\chi$. Note that $h(0)=0$ and $h'(t)=[g(t)]^{\frac{1}{n+2\e}}$ is positive increasing, hence
$h$ is convex.
Observe also that $g(t) \geq g(0)=1$ hence $h'(t) =[g(t)]^{\frac{1}{n+2\e}} \geq 1$ yields
\begin{equation} \label{eq:minh(1)}
h(1)=\int_0^1 h'(s) ds \geq 1.
\end{equation}

Together with \eqref{eq:clef} our choice of $\chi$ yields, for all $t\in [0,T_0]$, 
$$
\frac{1}{(1+t)^2g'(t)}={\mu }(\f<-t) \leq \frac{\tilde{A}}{h^m(t)}.
$$
For $t\in [0,T_0]$, this reads
$$
h^m(t) \leq \tilde{A} (1+t)^2 g'(t)=(n+2\e) \tilde{A} (1+t)^2 h''(t)  (h')^{n+2\e-1}(t). 
$$
Multiplying  by $h'$,  integrating between $0$ and $t$, 
we infer that for all $t \in [0,T_0]$,
\begin{eqnarray*}
\frac{h^{m+1}(t)}{m+1}
&\leq&   (n+2\e)  \tilde{A}   \int_0^t (1+s)^2 h''(s)  (h')^{n+2\e}(s)\\
&\leq & \frac{(n+2\e) \tilde{A} (t+1)^2 }{n+2\e+1} \left ((h')^{n+2\e+1}(t) - 1 \right)  \\
&\leq&    \tilde{A}    (1+t)^2   (h')^{n+2\e +1}(t)  .
\end{eqnarray*}
Recall that    $m=n+3 \e$ so that 
$
\alpha:=m+1> \beta:= n+2\e +1>2.
$
The previous inequality then reads
$$
 (1+t)^{-\frac{2}{\beta}} \leq C  {h'(t)}{h(t)^{-\frac{\alpha}{\beta}}},
$$
for some uniform constant $C$ depending on $n,m,\tilde{A}$.
Since $\alpha>\beta>2$ and $h(1) \geq 1$, 
integrating the above inequality between $1$ and $T_0$ we obtain 
$
T_0 \leq C', 
$
for some uniform constant $C'$ depending on $C,\alpha,\beta$. 
Since $T_0$ was chosen arbitrarily in $(0,T_{\max})$ the result follows. 
 \end{proof}

 \subsection{Absolutely continuous measures}  \label{sec:abscont}
  
Assume $\mu=f dV_X$ is absolutely continuous 
 with respect to 
 a volume form $dV_X$,
 with density $0 \leq f \in L^p(dV_X)$ for some $p>1$.
 Since $\PSH(X,\omega) \subset L^r(dV_X)$ for any $1 \leq r <+\infty$, we obtain
 $$
 \int_X |u|^m d\mu \leq ||f||_{L^p(dV_X)} \cdot  \left( \int_X |u|^{qm} dV_X \right)^{1/q},
 $$
  for all $u \in\PSH(X,\omega)$,
 where $1/p+1/q=1$, so that $\PSH(X,\omega) \subset L^m(d\mu)$ for all $m \geq 1$.
 Thus Theorem \ref{thm:uniform1} applies to this type of measures, providing a new proof
 of the celebrated a priori estimate of Kolodziej \cite{Kol98} (see also \cite{EGZ09}).
 
 \smallskip
 
 As in \cite{Kol98} our technique also covers the case of more general densities as we briefly indicate.
 Let $w:\R^+ \rightarrow \R^+$ be a convex increasing weight.
A measurable function $f$ belongs to the Orlicz class $L^{w}(dV_X)$ if there exists $\alpha>0$ such that
 $$
 \int_X w(\alpha |f|) dV_X <+\infty.
 $$
 The Luxembourg norm of $f$ is defined as
 $$
 ||f||_{w}:=\inf \{ r>0, \;  \int_X w( |f|/r) dV_X  \leq 1 \};
 $$
 it turns $L^{w}(dV_X)$ into a Banach space. 
 
 If $w^*$ denotes the conjugate convex weight of $w$ (its Legendre transform), 
 H\"older-Young inequality ensures that for all  measurable functions $f,g$,
 $$
 \int_X |fg| dV_X \leq 2  ||f||_{w}  ||g||_{w^*}.
 $$
 We refer the reader to \cite{RR} for more information on Orlicz classes.
 
 \smallskip
 
 Theorem \ref{thm:uniform1} thus allows to reprove 
 \cite[Theorem 2.5.2]{Kol98}:

 \begin{corollary} 
 Let $\mu=fdV_X$ be a probability measure.
 Let $w:\R^+ \rightarrow \R^+$ be a convex increasing weight  
  that grows at infinity at least like $t (\log t)^{m}$ with $m>n$.
    If  $f$ belongs to the Orlicz class $L^{w}$ then
 any solution $\f \in \PSH(X,\omega) \cap L^{\infty}(X)$  to 
 $V^{-1}(\omega+dd^c \f)^n=\mu$ satisfies
 $$
 {\rm Osc}_X (\f) \leq  T_{\mu}
 $$
 for some uniform constant $T_{\mu} \in \R^+$.
 \end{corollary}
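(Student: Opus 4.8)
The plan is to deduce the Corollary from Theorem~\ref{thm:uniform1}: since $m>n$, it suffices to check that the Orlicz integrability of $f$ forces $\PSH(X,\omega)\subset L^{m}(\mu)$ together with a uniform bound on $A_{m}(\mu)$. As a first reduction, note that $w$ grows at infinity at least like $w_0(t):=t(\log t)^{m}$, so $w(t)\ge c\,w_0(t)$ for $t$ large, hence $L^{w}\subset L^{w_0}$ and in particular $\|f\|_{w_0}<\infty$ (the Luxembourg norm is finite because $\int_X w_0(|f|/r)\,dV_X\to 0$ as $r\to\infty$ by monotone/dominated convergence). So, up to harmless constants, we may work with $w=w_0$. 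A direct computation of the Legendre transform shows that the conjugate weight $w_0^{*}$ is finite everywhere (as $w_0$ is superlinear) and satisfies $w_0^{*}(s)\le C_1\exp\bigl(C_2\,s^{1/m}\bigr)$ for $s$ large, with $w_0^{*}$ bounded on bounded sets.

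\emph{Uniform control of $\|(-\psi)^{m}\|_{w_0^{*}}$.} Let $\mathcal N:=\{\psi\in\PSH(X,\omega):\sup_X\psi=0\}$. For $r\ge 1$ we bound $\int_X w_0^{*}\bigl((-\psi)^{m}/r\bigr)\,dV_X$ by splitting $X$ according to whether $(-\psi)^{m}/r$ is large or bounded: on the first region $w_0^{*}\bigl((-\psi)^{m}/r\bigr)\le C_1\exp\bigl((C_2/r^{1/m})(-\psi)\bigr)$, while on the second region the integrand is uniformly small once $r$ is large (since $w_0^{*}(0)=0$). By the classical uniform exponential integrability of sup-normalised $\omega$-psh functions (Skoda's estimate, see \cite{GZbook}), there are $\alpha,C_0>0$ with $\int_X e^{-\alpha\psi}\,dV_X\le C_0$ for every $\psi\in\mathcal N$; choosing $r$ so large that $C_2/r^{1/m}<\alpha$ gives $\int_X w_0^{*}\bigl((-\psi)^{m}/r\bigr)\,dV_X\le 1$, hence
$$
M:=\sup_{\psi\in\mathcal N}\bigl\|(-\psi)^{m}\bigr\|_{w_0^{*}}\le r<\infty.
$$

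\emph{Conclusion.} The Hölder--Young inequality in Orlicz spaces recalled above yields, for every $\psi\in\mathcal N$,
$$
\int_X(-\psi)^{m}\,d\mu=\int_X(-\psi)^{m}f\,dV_X\le 2\,\|f\|_{w_0}\,\bigl\|(-\psi)^{m}\bigr\|_{w_0^{*}}\le 2M\|f\|_{w_0},
$$
so $\PSH(X,\omega)\subset L^{m}(\mu)$ and $A_{m}(\mu)\le\bigl(2M\|f\|_{w_0}\bigr)^{1/m}<\infty$. Since $m>n$, Theorem~\ref{thm:uniform1} applies and gives ${\rm Osc}_X(\f)\le T_\mu$ with $T_\mu=T(A_m(\mu))$. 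The only step beyond bookkeeping is the uniform bound $M<\infty$, and it is precisely there that the hypothesis $m>n$ is used: the $t(\log t)^{m}$ growth of the weight makes $w_0^{*}$ grow only like $\exp(s^{1/m})$, slow enough to be absorbed by the uniform exponential integrability of $\omega$-psh functions, whereas an essentially slower weight would make $w_0^{*}$ grow too fast for this absorption to work.
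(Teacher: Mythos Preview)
Your proof is correct and follows the same path as the paper's: both compute that the conjugate weight grows like $\exp(Cs^{1/m})$, invoke Skoda's uniform exponential integrability to obtain $\PSH(X,\omega)\subset L^m(\mu)$ with $A_m(\mu)$ uniformly bounded, and then apply Theorem~\ref{thm:uniform1}. The only cosmetic difference is that the paper uses the additive Young inequality $\alpha^m\int_X|u|^m\,d\mu\le\int_X w(f)\,dV_X+\int_X e^{2\alpha|u|}\,dV_X$ directly, whereas you pass through Luxembourg norms and the multiplicative H\"older--Young inequality; two small caveats are that your splitting argument for $\int_X w_0^*((-\psi)^m/r)\,dV_X\le 1$ is a bit loose (on the ``bounded'' region the integrand is bounded, not small, so one should rather use convexity of $w_0^*$ to rescale), and your closing sentence mislocates the hypothesis $m>n$, which is needed for Theorem~\ref{thm:uniform1} and not for the Skoda absorption (the latter works for any $m>0$).
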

 
\begin{proof}
While this was not required for the case of $L^p$ densities, we need here to invoke
Skoda's uniform integrability result (see \cite[Theorem 8.11]{GZbook}):
there exists $\alpha>0$ and $C=C(\alpha,M)>0$ such that
$$
\sup \left\{ \int_X e^{2 \alpha |u|} dV_X, \; u \in \PSH(X,\omega) \text{ and } -M \leq \sup_X u \leq 0 \right\} \leq C.
$$

The reader will check that, as $s \rightarrow +\infty$, the 
conjugate weight
$w^*(s)$ grows like 
$$
w^*(s) \sim s^{1-\frac{1}{m}} \exp(s^{\frac{1}{m}}) \leq  \exp(2 s^{\frac{1}{m}}).
$$
It follows therefore from Young inequality that any $\omega$-psh function
$u$ satisfies
$$
\alpha^m \int_X |u|^{m} d\mu \leq \int_X w \circ f dV_X + \int_X \exp( 2 \alpha |u|) dV_X <+\infty.
$$
Thus $\PSH(X,\omega) \subset L^m(\mu)$ and the conclusion follows from Theorem \ref{thm:uniform1}.
\end{proof}
 
 One can slightly improve the assumption on the density 
 as in  \cite[Theorem 2.5.2]{Kol98}, we leave the technical details to the interested reader.

  \begin{remark}
  It follows from the 
  Chern-Levine-Nirenberg inequality that
if $\mu=(\omega+dd^c \f)^n$ is the Monge-Amp\`ere measure of 
a bounded $\omega$-psh function, then
$\PSH(X,\omega) \subset L^1(\mu)$.
If $n=1$ this condition   is 
 equivalent to $\mu$ having bounded potential (see \cite[Lemma 3.2]{DnGL20}).
 Note however that when $n \geq 2$,
 \begin{itemize}
 \item the condition  $\PSH(X,\omega) \subset L^n(\mu)$,  $\mu=(\omega+dd^c \f)^n$,
 is not sufficient to guarantee that the $\omega$-psh function $\f$ is bounded ;
 \item  
one cannot improve the C-L-N inequality:
there are examples
of  Monge-Amp\`ere measures with bounded potential
and $\PSH(X,\omega) \not\subset L^{1+\e}(\mu)$.
 \end{itemize}
 \end{remark}
 
 

\subsection{Stability estimate}

 We  now
 establish the following stability estimate,
 which can be seen as a refinement of \cite[Proposition 5.2]{GZ12}.

   \begin{theorem}  \label{thm:stability}
   Let $\omega,\mu$ be as in Theorem \ref{thm:uniform1}. 
Let $\f \in\PSH(X,\omega) \cap L^{\infty}(X)$  be such that $\sup_X \varphi=0$ and
 $V^{-1}(\omega+dd^c \f)^n=\mu$.  Then 
 $$
\sup_X (\phi-\varphi)_+ \leq  T \left (\int_X (\phi-\varphi)_+d\mu \right)^{\tau},
 $$
for any $\phi \in \PSH(X,\omega)\cap L^{\infty}(X)$, 
where 
$\tau=\tau(n,m)>0$ and 
$$
T=T(\mu,\|\phi\|_{L^{\infty}})
$$
 is a uniform constant which depends on
 an upper bound on $||\phi ||_{L^{\infty}}$ and  
 $$
A_m({\mu}):= \sup \left\{ \left( \int_X (-\p)^m d\mu \right)^{\frac{1}{m}}, \; \p \in \PSH(X,\omega) \text{ with } \sup_X \p=0 \right\}.
 $$
 \end{theorem}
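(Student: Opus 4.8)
\textbf{Proof strategy for Theorem \ref{thm:stability}.}
The plan is to adapt the proof scheme of Theorem \ref{thm:uniform1}, replacing the function $\varphi$ there by the difference $\phi - \varphi$ and running the same envelope/Chebyshev/bootstrap machinery, but now keeping track of the additional small quantity $\int_X (\phi-\varphi)_+ \, d\mu$. Write $\psi := \phi - \varphi$ and set $M := \|\phi\|_{L^\infty} + \operatorname{Osc}_X(\varphi)$, which is bounded in terms of $\|\phi\|_{L^\infty}$ and $A_m(\mu)$ by Theorem \ref{thm:uniform1}; so $\psi$ is a bounded function with controlled sup-norm, and $\psi_+ = (\phi-\varphi)_+$ is quasi-continuous since it is the positive part of a difference of quasi-psh functions. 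Let
\[
S_{\max} := \sup\{ t > 0 \;:\; \mu(\psi > t) > 0\},
\]
so that by the domination principle $\sup_X \psi_+ \leq S_{\max}$ once we have an a priori bound on $S_{\max}$; the goal is to bound $S_{\max}$ in terms of $\int_X \psi_+ \, d\mu$.

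The key construction is as follows. Fix a concave increasing $\chi:\R^-\to\R^-$ with $\chi(0)=0$, $\chi'(0)=1$ (to be chosen as in Theorem \ref{thm:uniform1} using Lebesgue's layer-cake formula and the distribution function $t\mapsto \mu(\psi>t)$, so that $\int_X (\chi'\circ(-\psi))^{n+2\e}\,d\mu \leq 2$). Apply Lemma \ref{lem:GLkey} with the two bounded $\omega$-psh functions $\varphi \leq \varphi + \psi_+$... but $\varphi+\psi_+$ need not be $\omega$-psh; instead use $\varphi \leq \max(\varphi,\phi) =: \phi'$, which \emph{is} $\omega$-psh and bounded, and note $\phi' - \varphi = \psi_+$. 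Then Lemma \ref{lem:GLkey} applied to $\varphi \leq \phi'$ with the weight $\chi$ acting on $\varphi - \phi' = -\psi_+$ gives, for $w := P_\omega(\phi' + \chi\circ(\varphi-\phi'))$,
\[
(\omega + dd^c w)^n \leq 1_{\{w = \phi'+\chi\circ(-\psi_+)\}} \, (\chi'\circ(-\psi_+))^n \, (\omega+dd^c\varphi)^n = V \, 1_{\{\cdots\}}\,(\chi'\circ(-\psi_+))^n\,\mu.
\]
On the contact set $w = \phi' + \chi\circ(-\psi_+)$, and since $|\chi(-s)| \leq s\,\chi'(-s)$, we get $-w \leq -\phi' + \psi_+\chi'\circ(-\psi_+) \leq M + \psi_+\chi'\circ(-\psi_+)$ there. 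The crucial gain over Theorem \ref{thm:uniform1} is that the energy integral $\int_X (-w)^\e\, MA(w)$ now splits: the "$\psi_+$" part is controlled via Hölder by $\big(\int_X \psi_+\,d\mu\big)^{\e/(n+2\e)}$ times the controlled constant $\big(\int_X(\chi'\circ(-\psi_+))^{n+2\e}d\mu\big)^{(n+\e)/(n+2\e)} \leq 2$, producing a factor of $\big(\int_X\psi_+\,d\mu\big)^{\tau_0}$ for a small exponent $\tau_0 > 0$.

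From there one mimics the end of the proof of Theorem \ref{thm:uniform1}: the bound on $\int_X(-w)^\e\,MA(w)$ gives a lower bound on $\sup_X w$ of the form $-\sup_X w \leq C\,\big(\int_X\psi_+\,d\mu\big)^{\tau_0} + C$, hence a bound on $\|w\|_{L^m(\mu)}$, and since $w \leq \phi' + \chi\circ(-\psi_+) \leq M + \chi\circ(-\psi_+)$ one controls $\|\chi\circ(-\psi_+)\|_{L^m(\mu)}$; Chebyshev then yields $\mu(\psi > t) \leq \tilde A / h^m(t)$ with $h(t) = -\chi(-t)$ and $\tilde A$ now \emph{proportional to a positive power of} $\int_X\psi_+\,d\mu$ (plus a controlled additive constant). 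Feeding this back into the differential inequality for $h$ coming from the choice of $\chi$ and integrating exactly as before produces $S_{\max} \leq T\,\big(\int_X\psi_+\,d\mu\big)^{\tau}$ for suitable $\tau = \tau(n,m) > 0$ and $T = T(A_m(\mu), \|\phi\|_{L^\infty})$. The main obstacle I expect is bookkeeping the two regimes when $\int_X\psi_+\,d\mu$ is small: one must ensure the additive constants do not destroy the smallness, which is handled by observing that for $\int_X\psi_+\,d\mu$ bounded below the estimate is trivial (since $\sup_X\psi_+ \leq 2M$ automatically), so one only needs the argument in the regime $\int_X\psi_+\,d\mu \leq 1$, where the power $\big(\int_X\psi_+\,d\mu\big)^{\tau_0}$ dominates and the constants can be absorbed. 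A secondary technical point is justifying the use of $\phi' = \max(\varphi,\phi)$ in Lemma \ref{lem:GLkey} and verifying that the contact set inclusions survive, but this is routine given quasi-continuity of $\psi_+$.
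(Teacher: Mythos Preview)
Your overall framework is the same as the paper's: reduce to $\f\le\phi$ via $\phi'=\max(\f,\phi)$, apply Lemma~\ref{lem:GLkey} with the pair $\f\le\phi'$, use an energy estimate to control $\sup_X w$, then Chebyshev and the ODE for $h(t)=-\chi(-t)$. The reduction and the application of Lemma~\ref{lem:GLkey} are correct.

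The gap is in your claim that the Chebyshev constant $\tilde A$ becomes ``proportional to a positive power of $\int_X\psi_+\,d\mu$ (plus a controlled additive constant)''. Your argument does not justify this. You correctly observe that the \emph{relative} energy
\[
\int_X(\phi'-w)^{\e}\,MA(w)\;\le\;C(M)\left(\int_X\psi_+\,d\mu\right)^{\e/(n+2\e)}
\]
carries a small factor (this uses $\psi_+^{n+2\e}\le(2M)^{n+2\e-1}\psi_+$). But the only thing you extract from this is a lower bound on $\sup_X w$, which is then merely a \emph{constant} (the small term is swallowed by $\inf_X\phi'$). Compactness then gives $\|w\|_{L^m(\mu)}\le C$, hence $\|\chi\circ(-\psi_+)\|_{L^m(\mu)}\le \|w\|_{L^m}+M\le C$, and Chebyshev yields $\tilde A\le C^m$ --- a constant, with no small factor at all. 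Your proposed fix (``for $\int_X\psi_+\,d\mu\le1$ the power dominates and the constants can be absorbed'') is exactly backwards: when $\int_X\psi_+\,d\mu$ is small, it is the additive constant that dominates, and the resulting bound on $S_{\max}$ does not go to zero.

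What is missing is a direct estimate showing that $\int_X|\chi\circ(-\psi_+)|^m\,d\mu$ itself is bounded by a constant times a positive power of $\int_X\psi_+\,d\mu$, with \emph{no} additive constant. The paper obtains this by a two--stage H\"older argument (their inequality~\eqref{eq: stability1}): first split $|\chi|^m=|\chi|^{n+a}\cdot|\chi|^{\e-a}$, bound the second factor by $(\phi'-w)^{\e-a}$ using the envelope inequality $w\le\phi'+\chi\circ(-\psi_+)$, and separate it via H\"older using the (constant) $L^q(\mu)$ bound on $\phi'-w$ with $q=(\e-a)(n+b)/(b-a)<m$; then on the remaining factor use $|\chi|\le\psi_+\cdot\chi'$ and a second H\"older to isolate a pure power of $\int_X\psi_+\,d\mu$ (the exponent on $\psi_+$ is reduced to $1$ using $\psi_+\le 2M$). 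This is the step that produces $\tilde A\le C\,\delta$ with $\delta=(\int_X\psi_+\,d\mu)^{\gamma}$ and no additive remainder, and without it the final ODE integration cannot give $S_{\max}\to0$ as $\delta\to0$.
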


 \begin{proof}
 	Replacing $\phi$ by $\max(\varphi,\phi)$, we can assume that $\varphi\leq \phi$. Define
 \[
 T_{\max}:= \sup \{t>0 \; : \; \mu(\f <\phi-t) >0\}. 
 \]
  It follows from Theorem \ref{thm:uniform1} that $T_{\max}$ is uniformly controlled by $\mu$ and $||\phi||_{L^{\infty}}$.

 We let $\chi:\R^- \rightarrow \R^-$ denote a {\it concave} increasing function
 such that $\chi(0)=0$ and $\chi'(0) = 1$. 
 We set $\p= \phi+ \chi \circ (\f-\phi)$, $u=P(\p)$ and observe that
\begin{eqnarray*}
\omega+dd^c \p &= &  \omega_{\phi} + \chi' \circ (\f-\phi) (\omega_\f -\omega_{\phi}) +\chi'' \circ (\f-\phi) d(\f-\phi) \wedge d^c( \f -\phi) \\
&\leq & \chi' \circ (\f-\phi) \omega_\f.
\end{eqnarray*}
It follows from Lemma \ref{lem:GLkey} that
 \[
 MA(u) := \frac{V_{\omega}}{(\omega+dd^c u)^n} \leq   1_{\{u=\p\}} (\chi' \circ (\f-\phi))^n \mu.
 \]

 \smallskip
 
\noindent  {\it Controlling the energy of $u$}.
We fix $0<a<b<c<2c<\e$ so small that 
$$
q:=  \frac{(\varepsilon-a)(n+b)}{b-a}<m=n+\e.
$$
 The concavity of $\chi$ and the normalization $\chi(0)=0$ yields $|\chi(t)| \leq |t| \chi'(t)$. 
 Since $u=\phi +\chi \circ (\f-\phi)$ on the support of $(\omega+dd^c u)^n$
 and $\PSH(X,\omega) \subset L^{n+2c}(\mu)$,
 H\"older inequality yields
 \begin{flalign*}
 0 \leq \int_X (-u+\phi)^{c} & MA(u) \leq   \int_X (-\chi \circ (\f-\phi))^{c} (\chi' \circ (\f-\phi))^n d\mu   \\
 & \leq  \int_X (- \f+\phi)^{c} (\chi' \circ (\f-\phi))^{n+c} d\mu   \\
 & \leq   \left( \int_X (-\f+\phi)^{n+2c} d\mu \right)^{\frac{c}{n+2c}} 
 \left( \int_X ( \chi' \circ (\f-\phi))^{n+2c} d\mu \right)^{\frac{n+c}{n+2c}} \\
 & \leq A_m(\mu)^c \left( \int_X ( \chi' \circ (\f-\phi))^{n+2c} d\mu \right)^{\frac{n+c}{n+2c}}.
 \end{flalign*}
 
  \smallskip
 
\noindent  {\it Controlling the norms $||u||_{L^m}$}.
We 
choose    $\chi$ below s.t.
$\int_X ( \chi' \circ (\f-\phi))^{n+2c} d\mu  \leq B$ is   under control.
This provides a uniform lower bound on $\sup_X u$.
Indeed  our normalizations yield $\chi(t) \leq t$ hence
$
u \leq \phi+\chi (\f-\phi) \leq \f \leq 0,
$
while
$$
0 \leq  (-\sup_X (u-\phi))^{c}  
\leq \int_X (-u+\phi)^{c} MA(u)  
\leq A_m(\mu)^c B^{\frac{n+c}{n+2c}} 
$$
yields a lower bound on $ \sup_X(u-\phi)$.
Now $u=u-\phi+\phi \geq u-\phi+\inf_X \phi$, so 
$\sup_X u \geq \sup_X(u-\phi)+\inf_X \phi \geq -A_m(\mu) B^{\frac{n+c}{c(n+2c)}} +\inf_X \phi$.

Thus  $u$ belongs to a compact set of $\omega$-psh functions:
 its norm $||u||_{L^q(\mu)}$ is under control  for any $q \leq m$.
 Since $u -\phi \leq \chi \circ (\f -\phi)\leq 0$,
 H\"older inequality yields
\begin{flalign}
	\int_X & |\chi \circ (\f-\phi)|^m d\mu   \leq \int_X |\chi\circ (\f-\phi)|^{n+a}  (\phi-u)^{\varepsilon-a}d\mu\nonumber \\
	& \leq \left ( \int_X  |\chi\circ (\varphi-\phi)|^{n+b}d\mu \right)^{\frac{n+a}{n+b}} \left ( \int_X  (\phi-u)^{q}d\mu \right)^{\frac{b-a}{n+b}}\nonumber \\
	& \leq C_{\mu}' \left ( \int_X  |(\phi-\varphi)\chi'\circ (\varphi-\phi)|^{n+b}d\mu \right)^{\frac{n+a}{n+b}}\nonumber \\
	& \leq C_{\mu}' \left ( \int_X  (\phi-\varphi)^{\frac{(n+c)(n+b)}{c-b}}d\mu \right)^{\frac{(c-b)(n+a)}{(n+c)(n+b)}} \left ( \int_X  | \chi'\circ (\varphi-\phi)|^{n+c}d\mu \right)^{\frac{n+a}{n+c}}\nonumber \\
	& \leq C_1 B^{\frac{n+a}{n+c}}  \left ( \int_X (\phi-\varphi) d\mu \right )^{\gamma}  
	=:\tilde{A},  \label{eq: stability1}
\end{flalign}
where $\gamma= \dfrac{(c-b)(n+a)}{(n+c)(n+b)}$, and $C_1$ depends on $C_{\mu}$, 
$||\varphi||_{L^{\infty}}$ and $||\phi||_{L^{\infty}}$.

\smallskip

 It follows therefore from Chebyshev inequality that
\begin{equation} \label{eq:clef bis}
 {\mu }(\f<\phi-t) \leq \frac{\tilde{A}}{|\chi|^m(-t)}.
\end{equation}

   \smallskip
 
\noindent  {\it Choice of $\chi$}. 
Fix $T_0 \in (0,T_{\max})$. 
We set $g(t)=[\chi' (-t)]^{n+2c}$  and define $\chi$ by imposing $\chi(0)=0$, $\chi'(0)=1$,   and
$$
g'(t)=
\begin{cases}
	\dfrac{1}{{\mu }(\f<\phi-t)}, \; \text{if} \; t\leq T_0\\
	\; 
	\\	1 \; \; \; \; \text{ if}\;  t >  T_0
\end{cases}.
$$

This choice guarantees that 
$$
\int_X ( \chi' \circ (\f-\phi))^{n+2c} d\mu \leq \mu(X) + \int_0^{T_{\max}} dt =1+T_{\max}.
$$
It follows from Theorem \ref{thm:uniform1} that $T_{\max} \leq T_{\mu}$ is uniformly bounded from above,
hence  $B:=1+T_{\mu}$ is under control.
Together with \eqref{eq: stability1} and \eqref{eq:clef bis} we thus obtain 
\begin{equation}\label{eq: stab2}
 {\mu }(\f<\phi-t) \leq \frac{C_2\delta }{|\chi|^m(-t)},
 \end{equation}
 where $\delta:=\left ( \int_X (\phi-\varphi) d\mu \right )^{\gamma}$.


   \smallskip
 
\noindent  {\it Conclusion}. 
Set $h(t)=-\chi(-t)$.
It follows from \eqref{eq: stab2} that for all $t\in [0,T_0]$,
$$
\frac{1}{g'(t)}={\mu }(\f<\phi-t) \leq \frac{C_2 \delta}{h^m(t)},
$$
hence
$$
h^m(t) \leq C_2 \delta g'(t)=(n+2c) C_2\delta  h''(t)  (h')^{n+2c-1}(t).
$$
Multiplying  by $h'$,  integrating between $0$ and $t$,
we infer that for all $t \in [0,T_0]$,
\begin{eqnarray*}
 h^{m+1}(t) 
&\leq&   (m+1) (n+2c) C_2\delta  \int_0^t  h''(s)  (h')^{n+2c}(s)ds \nonumber\\
&\leq &C_3 \delta \left ( (h')^{n+2c+1}(t)-1\right ),
\end{eqnarray*}
which yields
\begin{equation}
	 \label{eq: stab3}
	 1\leq  \frac{C_3 \delta (h')^{n+2c+1}(t)}{h^{m+1}(t)  + C_3 \delta}. 
\end{equation}
Recall that we have set  $m=n+ \e$ so that 
$$
\alpha:= m+1= n+\e +1> \beta:= n+2c +1.
$$
Raising both sides of \eqref{eq: stab3} to power $1/\beta$ we  obtain
\[
1 \leq  \frac{C_4\delta^{\frac{1}{\beta}}h'(t)}{(h(t)^{\alpha}+ C_3\delta)^{1/\beta}}. 
\]
We integrate between $0$ and $T_0$ and
 make the  change of variables $x= h(t)\delta^{-1/\alpha}$ to conclude
$
T_0 \leq C_5 \delta^{1/\alpha} \leq C_5 \left (\int_X (\phi-\varphi)_+ d\mu \right)^{\tau}$,
with $\tau=\gamma/\alpha$. Letting $T_0 \to T_{\max}$ we obtain the desired estimate. 
 \end{proof}

  \section{Refinements and extensions}
  
  We explain now how minor modifications of the proof  of Theorem \ref{thm:uniform1}
  provide other important uniform estimates in various contexts of K\"ahler geometry.

 \subsection{Big cohomology classes} \label{sec:big}

 Let $\theta$ be a smooth closed $(1,1)$-form  
 that represents a big cohomology class $\alpha$.
 We 
  set
 $$
 V_{\theta}(x):=\sup \{ v(x) , \; v \in \PSH(X,\theta)
 \text{ with } v \leq 0 \}.
 $$
  The latter is a $\theta$-psh function with minimal singularities, i.e. any other $\theta$-psh function
 $\f$ satisfies $\f \leq V_{\theta}+C$ for some constant $C$.  It is locally bounded in the ample locus
 ${\rm Amp}(\alpha)$, a Zariski open subset of $X$ where the cohomology class
 $\alpha$ behaves like a K\"ahler class.
 
 The Monge-Amp\`ere measure $(\theta+dd^c \f)^n$ of a $\theta$-psh function with minimal singularities
is well defined in ${\rm Amp}(\alpha)$,
and one can show that
it has finite mass independent from $\f$ and equal to 
$$
V_{\alpha}={\rm Vol}(\alpha)=\int_{{\rm Amp}(\alpha)} (\theta+dd^c V_{\theta})^n>0,
$$
 the volume of the class $\alpha$.

We refer the reader to \cite{BEGZ10} for more details on these notions and focus here on 
 slightly extending    \cite[Theorem B]{BEGZ10} by our new approach:
 
 \begin{theorem} \label{thm:big1}
 Let $\mu$ be a probability measure on $X$.
  If $\PSH(X,\theta) \subset L^m(\mu)$ for some $m>n$, 
 then there exists a unique    $\f \in \PSH(X,\theta)$  with minimal singularities
 such that $V_{\alpha}^{-1}(\theta+dd^c \f)^n=\mu$ and $\sup_X \f=0$.
Moreover
 $$
 ||\f-V_{\theta}||_{L^{\infty}(X)} \leq T_{\mu}
 $$
 for some uniform constant $T_{\mu}$.
 \end{theorem}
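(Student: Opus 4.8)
The plan is to reduce Theorem \ref{thm:big1} to the already-established global estimate of Theorem \ref{thm:uniform1} by passing from the (possibly non-Kähler, non-semipositive) reference form $\theta$ to a genuine semi-positive big form via a regularization/approximation argument. First I would recall from \cite{DP04} that, since $\alpha$ is big, there is a $\theta$-psh function $\psi_0$ with analytic singularities and $\theta+dd^c\psi_0\geq\varepsilon_0\omega_X$ a Kähler current; modifying $\psi_0$ we may arrange $\sup_X\psi_0=0$ and that it is smooth with minimal singularities outside a proper analytic set whose complement is exactly $\Amp(\alpha)$. Working on $\Amp(\alpha)$, or on a suitable blow-up $\pi\colon \tilde X\to X$ resolving the singularities of $\psi_0$ so that $\pi^*\theta+dd^c(\psi_0\circ\pi)=\tilde\omega+[D]$ with $\tilde\omega$ smooth semi-positive and big and $D$ an effective divisor, one translates the equation $V_\alpha^{-1}(\theta+dd^c\varphi)^n=\mu$ into an equation $V_{\tilde\omega}^{-1}(\tilde\omega+dd^c u)^n=\tilde\mu$ on $(\tilde X,\tilde\omega)$, where $\tilde\mu=\pi^*\mu$ (a probability measure, still not charging pluripolar sets) and $V_{\tilde\omega}=\vol(\alpha)$. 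The hypothesis $\PSH(X,\theta)\subset L^m(\mu)$ transfers to $\PSH(\tilde X,\tilde\omega)\subset L^m(\tilde\mu)$ because pulling back a $\tilde\omega$-psh function and adding a multiple of the (quasi-psh, locally bounded-above) local potentials of $[D]$ produces a $\pi^*\theta$-psh function, hence is dominated by a $\theta$-psh function up to a constant, and $A_m(\tilde\mu)$ is controlled by $A_m(\mu)$ plus a term coming from the $L^m(\mu)$-norm of the Lelong-type potential of $D$, which is finite since that potential is quasi-psh.

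Second, I would address \emph{existence} and the \emph{minimal-singularities} regularity. Existence of a finite-energy solution $\varphi\in\PSH(X,\theta)$ with $V_\alpha^{-1}(\theta+dd^c\varphi)^n=\mu$ and $\sup_X\varphi=0$ is the variational result of \cite{BEGZ10} (the measure is non-pluripolar of total mass $\vol(\alpha)$); alternatively one produces $\varphi$ as a decreasing limit of solutions $\varphi_j$ to regularized equations $V_\alpha^{-1}(\theta+dd^c\varphi_j)^n=\mu_j$ with $\mu_j$ smooth, invoking the Kähler case on a perturbed form $\theta+\varepsilon_j\omega_X$. The point is that once we know the a priori bound, $\varphi$ automatically has minimal singularities. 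Concretely, from the transferred equation on $\tilde X$, Theorem \ref{thm:uniform1} gives $\mathrm{Osc}_{\tilde X}(u)\leq T_{\tilde\mu}$ with $T_{\tilde\mu}=T(A_m(\tilde\mu))$; pushing back down, $u=\varphi\circ\pi$ up to a bounded quasi-psh correction coming from $D$, so $\varphi-V_\theta$ is bounded on $\Amp(\alpha)$ with the stated uniform bound, and being bounded on the ample locus is precisely the statement that $\varphi$ has minimal singularities (by definition of $V_\theta$ as the envelope of negative $\theta$-psh functions and the domination principle in big classes).

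Third, \emph{uniqueness}: this follows from the domination principle for functions with minimal singularities in big cohomology classes, as in \cite[Corollary 2.5]{BEGZ10} or \cite{BBGZ13} — if $\varphi_1,\varphi_2$ both solve the equation with $\sup_X\varphi_i=0$ and minimal singularities, then comparing $(\theta+dd^c\max(\varphi_1,\varphi_2))^n$ with $\mu$ and using that $\mu$ puts no mass on pluripolar sets forces $\varphi_1=\varphi_2$. I expect the \textbf{main obstacle} to be the careful bookkeeping in the transfer step: one must check that the Monge-Amp\`ere \emph{non-pluripolar} mass is genuinely preserved under $\pi$ (mass is concentrated on $\Amp(\alpha)\simeq\tilde X\setminus\mathrm{Supp}(D)$, where $\pi$ is a biholomorphism, so no mass is lost on the exceptional locus), and that the integrability class $\PSH\subset L^m(\mu)$ is not destroyed by adding the unbounded potential of $D$ — this is where one uses that $g_D$ is quasi-psh hence in $L^m(\mu)$ by hypothesis applied to (a $\theta$-psh function comparable to) $g_D$ itself. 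Once these points are settled, Theorem \ref{thm:big1} is a direct corollary of Theorem \ref{thm:uniform1}; alternatively, and perhaps more cleanly for exposition, one can rerun the proof of Theorem \ref{thm:uniform1} verbatim on $\Amp(\alpha)$ with $V_\theta$ playing the role of the ``zero'' function and $\chi\circ(\varphi-V_\theta)$ in place of $\chi\circ\varphi$, since every ingredient used there — Lemma \ref{lem:GLkey}, the envelope properties of Proposition \ref{pro:orthog}, the compactness of sup-normalized $\theta$-psh functions, and the domination principle — has a known analogue for minimal-singularities potentials in big classes, courtesy of \cite{BEGZ10,GLZ19}.
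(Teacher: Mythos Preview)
Your closing sentence is in fact the paper's entire proof: it reruns the argument of Theorem \ref{thm:uniform1} with $V_\theta$ in the role of the zero function, setting $\tilde\f=\f-V_\theta$, $\p=V_\theta+\chi\circ\tilde\f$, $u=P_\theta(\p)$, and invoking the big-class versions of Lemma \ref{lem:GLkey} and Proposition \ref{pro:orthog} from \cite{GLZ19}. Existence of a finite-energy solution is quoted from \cite[Theorem A]{BEGZ10}, and the a priori bound then forces minimal singularities. So the ``alternative'' you relegate to a parenthetical remark is precisely the intended argument, and it is complete.

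By contrast, your primary blow-up reduction has a genuine gap. You assert that one can choose $\psi_0$ with \emph{analytic singularities} and simultaneously with \emph{minimal singularities}, and hence that $V_{\tilde\omega}=\vol(\alpha)$. In general this is false: the existence of a $\theta$-psh function with analytic singularities computing the full volume is equivalent to a Zariski-type decomposition of $\alpha$, which need not exist; Fujita approximation only gives $\int(\theta+dd^c\psi_k)^n\nearrow\vol(\alpha)$. If $\psi_0$ is strictly more singular than $V_\theta$, then for a solution $\f$ with minimal singularities the function $(\f-\psi_0)\circ\pi$ is not bounded above near $D$ and, more fatally, $\tilde\omega+dd^c((\f-\psi_0)\circ\pi)=\pi^*\theta_\f-[D]$ is not a positive current, so your candidate $u$ is not $\tilde\omega$-psh and the equation does not transfer. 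The mass bookkeeping you flag as the ``main obstacle'' is thus not a technicality but an actual obstruction; the reduction to a semi-positive form via a single blow-up does not go through without additional hypotheses on $\alpha$. Drop that route and promote your final sentence to the proof.
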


 \begin{proof}
 It follows from \cite[Theorem A]{BEGZ10} that there exists a unique finite energy solution $\f$.
 The key point for us here is to establish the a priori estimate.
Note that $\f \leq V_{\theta}$  since $\sup_X \f=0$
 Our goal is to show that $V_{\theta}-T_{max} \leq \f$, obtaining a uniform upper bound on $T_{max}$.

 A difficulty lies in the fact that $\theta$ is not a positive form. 
 We consider the positive current $\omega=\theta+dd^c V_{\theta}$ and set 
 $\tilde{\f}=\f-V_{\theta} \leq 0$. Observe that 
 $$
 \theta_{\f}:=\theta+dd^c \f=\omega+dd^c \tilde{\f}=:\omega_{\tilde{\f}} \geq 0.
 $$
 Our plan is thus to show that the "$\omega$-psh" function $\tilde{\f}$ is bounded.
 
 As in the proof of Theorem \ref{thm:uniform1} we let $\chi: \R^- \rightarrow \R^-$ denote a concave increasing function
 such that $\chi(0)=0$ and $\chi'(0)= 1$.
 We set $\p=V_{\theta}+\chi \circ \tilde{\f}$ and consider
 $$
 u=P_{\theta}(\p)=P_{\theta}(V_{\theta}+\chi \circ (\f-V_{\theta})) .
 $$
Observe that 
 \begin{eqnarray*}
\theta+dd^c \p &= & \chi' \circ \tilde{\f} \, \theta_{{\f}}+[1-\chi' \circ \tilde{\f}] \omega+
\chi'' \circ \tilde{\f} \, d \tilde{\f} \wedge d^c \tilde{\f} \\
&\leq & \chi' \circ (\f-V_{\theta}) \,  (\theta+dd^c \f).
\end{eqnarray*}

The envelopes in the context of big cohomology classes enjoy similar properties as those reviewed
in Section \ref{sec:envelopes}.
In particular the complex Monge-Amp\`ere measure
$(\theta+dd^c P_{\theta}(\p))^n$ is concentrated on 
the contact set $\{P_{\theta}(\p)=\p\}$ (see \cite[Theorem 2.7]{GLZ19})
and the big-version of Lemma \ref{lem:GLkey} holds, showing that
 $$
V_{\alpha}^{-1} (\theta+dd^c u)^n  
 \leq 1_{\{P_{\theta}(\p)=\p\}} (\chi' \circ (\f-V_{\theta}))^n \mu.
 $$
  The rest of the proof is identical to that of Theorem \ref{thm:uniform1}.
 \end{proof}

 \subsection{Degenerating families} \label{sec:families}
  
  Families of K\"ahler-Einstein varieties have been intensively studied in the past decade,
 requiring one to analyze the associated family of complex Monge-Amp\`ere equations.
 We refer the reader to \cite{Tos09,Tos10,ST12,GTZ13,DnGG20,Li20} for 
 detailed examples and  
 geometrical motivations.
 
 The most delicate situation is when the volume of the fiber collapses.
Theorem \ref{thm:uniform1} yields a uniform bound in this case, providing
an alternative proof and an extension of the main results of \cite{EGZ08,DP10}:

\begin{coro}
Let 
$\omega_t$ be a family of semi-positive and big forms on $X$, 
and assume there is a fixed   form $\Theta$ such that $0 \leq \omega_t \leq \Theta$.
We let $V_t:=\int_X \omega_t^n>0$ denote the volume of $(X,\omega_t)$.
 Let $\mu$ be a probability measure.
 If $\PSH(X,\Theta) \subset L^m(\mu)$ for some $m>n$, then
any solution $\f_t \in \PSH(X,\omega_t) \cap L^{\infty}(X)$  to 
 $$
\frac{1}{V_t} (\omega_t+dd^c \f_t)^n=  \mu 
 $$
  satisfies
 $
 {\rm Osc}_X (\f_t) \leq  T_{\mu}
 $
 for some uniform constant $T_{\mu}$.
\end{coro}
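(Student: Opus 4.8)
The plan is to reduce the statement to Theorem \ref{thm:uniform1} by passing to a single reference form. Since $0 \leq \omega_t \leq \Theta$, we may write $\omega_t = \Theta - \gamma_t$ with $\gamma_t$ a semi-positive closed $(1,1)$-form, but the cleaner route is to observe directly that the identity map realizes $\PSH(X,\omega_t) \subset \PSH(X,\Theta)$ up to the additive normalization: if $u \in \PSH(X,\omega_t)$ then $\Theta + dd^c u = (\Theta - \omega_t) + (\omega_t + dd^c u) \geq 0$, so $u \in \PSH(X,\Theta)$. Consequently the hypothesis $\PSH(X,\Theta) \subset L^m(\mu)$ forces $\PSH(X,\omega_t) \subset L^m(\mu)$ as well, and moreover the quantity $A_m(\mu)$ computed with respect to $\omega_t$ is bounded by the one computed with respect to $\Theta$ (the sup-normalized $\omega_t$-psh functions form a subfamily of the sup-normalized $\Theta$-psh functions, after shifting). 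Thus a constant depending on $A_m^{(\Theta)}(\mu)$ controls everything uniformly in $t$.

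First I would fix $t$ and normalize $\f_t$ by $\sup_X \f_t = 0$, so it suffices to bound $T_{\max} := \sup\{s>0 : \mu(\f_t < -s) > 0\}$. Then I would simply rerun the proof of Theorem \ref{thm:uniform1} verbatim with $\omega$ replaced by $\omega_t$ and $V$ by $V_t$: take $\chi : \R^- \to \R^-$ concave increasing with $\chi(0)=0$, $\chi'(0)=1$, set $\p = \chi \circ \f_t$ and $u = P_{\omega_t}(\p)$. The pointwise inequality $\omega_t + dd^c \p \leq \chi' \circ \f_t \cdot \omega_{t,\f_t}$ holds exactly as before (it only uses $\chi'' \leq 0$, $\chi' \geq 1$, and that $\omega_t$ is semi-positive so $[1 - \chi' \circ \f_t]\omega_t \leq 0$), and Lemma \ref{lem:GLkey} applies since $\omega_t$ is a smooth closed form, giving
$$
V_t^{-1}(\omega_t + dd^c u)^n \leq 1_{\{u = \p\}} (\chi' \circ \f_t)^n \, \mu.
$$
From here the energy estimate, the $L^m$-norm control of $u$, the Chebyshev inequality, and the explicit choice of $\chi$ via Lebesgue's formula all go through word for word, because the only analytic inputs are the compactness of sup-normalized $\omega_t$-psh functions in $L^{n+2\e}(\mu)$ — which follows from the inclusion $\PSH_A(X,\omega_t) \subset \PSH_A(X,\Theta)$ and compactness of the latter — and the bound $A_{n+2\e}^{(\omega_t)}(\mu) \leq A_m^{(\Theta)}(\mu)$.

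The conclusion is then the same differential-inequality argument for $h(t) = -\chi(-t)$, yielding $T_0 \leq C'$ for a constant $C'$ depending only on $n$, $m$ and an upper bound for $A_m^{(\Theta)}(\mu)$, hence on $\Theta$ and $\mu$ alone and not on the parameter $t$. Letting $T_0 \to T_{\max}$ gives $\mathrm{Osc}_X(\f_t) \leq T_{\max} \leq T_\mu$. I do not expect a genuine obstacle here: the only point requiring a word of justification is the uniformity, i.e. checking that every constant produced in the proof of Theorem \ref{thm:uniform1} can be taken to depend on $A_m(\mu)$ through an \emph{upper} bound and that the $\omega_t$-version of this quantity is dominated by the $\Theta$-version — both of which are immediate from $\omega_t \leq \Theta$. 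One should also note that existence and uniqueness of $\f_t$ is not part of the claim (it is assumed), so no variational input is needed.
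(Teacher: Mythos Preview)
Your proposal is correct and follows essentially the same approach as the paper: both reduce to Theorem \ref{thm:uniform1} via the inclusion $\PSH(X,\omega_t)\subset\PSH(X,\Theta)$, which yields $A_m^{(\omega_t)}(\mu)\leq A_m^{(\Theta)}(\mu)$ and hence a $t$-independent bound. The only difference is cosmetic: the paper invokes Theorem \ref{thm:uniform1} as a black box (since its bound already depends only on an upper bound for $A_m$), whereas you spell out that rerunning its proof works uniformly.
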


The point here is that the estimate is uniform in $t$ although the volumes
$V_t$ may degenerate to zero (volume collapsing).

\begin{proof}
 Theorem \ref{thm:uniform1}  provides a uniform bound
 $ {\rm Osc}_X (\f_t) \leq  T(A_m(\omega_t,\mu))$, where
  $$
A_m(\omega_t,{\mu}):= \sup \left\{ \int_X (-\p)^m d\mu, \; \p \in\PSH(X,\omega_t) \text{ with } \sup_X \p=0 \right\}.
 $$
Since $\PSH(X,\omega_t) \subset\PSH(X,\Theta)$ and 
$\PSH(X,\Theta) \subset L^m(\mu)$, we obtain that
$A_m(\omega_t,{\mu}) \leq A_m(\Theta,{\mu})<+\infty$.
The uniform upper bound follows.
\end{proof}

 This uniform estimate shows in particular that in many geometrical contexts,
 a uniform control on the $L^{n+\e}$-norm of the Monge-Amp\`ere
 potentials $\f_t$ suffices to obtain a $L^{\infty}$-control of the latter.
 
One can   obtain similarly  uniform estimates when the underlying 
complex structure is also changing:
let ${\mathcal X}$ be an irreducible and reduced complex K\"ahler space,
and let $\pi:{\mathcal X} \rightarrow \D$
denote a proper, surjective holomorphic map such that each fiber 
$X_t=\pi^{-1}(t)$ is an
$n$-dimensional, reduced, irreducible, compact K\"ahler space, for any $t \in \D$.
Given $\omega$ a K\"ahler form on ${\mathcal X}$ and $\omega_t:=\omega_{|X_t}$,
one can consider the complex Monge-Amp\`ere equations
$$
\frac{1}{V}(\omega_t+dd^c \f_t)^n=\mu_t,
$$
where 
\begin{itemize}
\item the volume $V=\int_{X_t} \omega_t^n$ turns out to be independent of $t$, and
\item $\mu_t$ is a family of probability measures on each fiber $X_t$
(e.g. the normalized Calabi-Yau measures of a degenerating
family of Calabi-Yau manifolds).
\end{itemize}
In many concrete geometrical situations (see e.g. \cite{GTZ13,DnGG20,Li20}), one can check that
$A_m(\omega_t,\mu_t) \leq A$ is uniformly bounded from above
for some $m>n$ (often any $m>1$).
If one can further uniformly compare $\sup_{X_t} \f_t$ and $ \int_{X_t} \f_t \frac{\omega_t^n}{V}$,
 then Theorem \ref{thm:uniform1} then applies and provides a uniform $L^{\infty}$-estimate.
It is thus sometimes not necessary to establish
a  uniform Skoda integrability theorem in families
(compare with \cite{DnGG20,Li20b}).

\subsection{Relative a priori $L^{\infty}$-bounds} \label{sec:relative}
 Fix $\omega$  a   semi-positive and big $(1,1)$ form, and 
$\rho$ an $\omega$-psh function with analytic singularities such that
$\omega+dd^c \rho \geq \delta \omega_X$ is a K\"ahler current
which is smooth in the ample locus ${\rm Amp}(\omega)$.
We normalize $\rho$ so that $\sup_X \rho=0$ and set $V=\int_X \omega^n>0$.

 We consider in this section the degenerate complex Monge-Amp\`ere equation 
 \begin{equation} \label{eq:geom}
V^{-1}  (\omega+dd^c \f)^n=  \mu= fdV_X,
 \end{equation} 
where  $\mu$ is a probability measure whose density $f \in L^1(X)$ does not belong to any  
good Orlicz class (see section \ref{sec:abscont}).
Since $\mu$ does not charge pluripolar sets, there exists a unique
"finite energy solution" $\f \in {\mathcal E}(X,\omega)$ (see \cite{GZbook}),
but one cannot expect  anylonger that $\f$ is globally bounded on $X$.

Given $\p$ a quasi-plurisubharmonic function on $X$ and $c>0$, we set
$$
E_c(\p):=\{ x \in X , \; \nu(\p,x) \geq c \},
$$
where $\nu(\p,x)$ denotes the  Lelong number of $\p$ at $x$. 
A celebrated theorem of Siu ensures that for any $c>0$, the
set $E_c(\p)$ is a closed analytic subset of $X$.

\begin{thm}   \label{thm:relative}
Assume   $f=g e^{-\p}$, where $0 \leq g \in L^p(dV_X)$, $p>1$,  and  $\p$ is a quasi-psh function.
Then  there exists a unique $\f \in\mathcal{E}(X,\omega)$ such that
\begin{itemize}
\item $\alpha (\p+\rho)-\beta \leq \f \leq 0$ with $\sup_X \f=0$;
\item $\f$ is locally bounded in the Zariski open set $\Omega:= {\rm Amp}(\omega) \setminus  E_{\frac{1}{q}}(\p)$;
\item $ V^{-1}(\omega+dd^c \f)^n=   fdV_X 
 \; \;  \text{ in } \; \;
\Omega$,
\end{itemize}
where $\alpha,\beta>0$ depend on  an upper bound for $||g||_{L^p}$
and $\frac{1}{p}+\frac{1}{q}=1$.
\end{thm}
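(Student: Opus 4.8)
I would reduce the statement to the a priori lower bound $\f\ge\alpha(\p+\rho)-\beta$ and then read off the local assertions from classical local pluripotential theory. First, since $\mu=fdV_X$ is a probability measure which does not charge pluripolar sets, finite-energy theory (see \cite{GZbook}) provides $\f\in\mathcal{E}(X,\omega)$ with $V^{-1}(\omega+dd^c\f)^n=\mu$ and $\sup_X\f=0$; uniqueness is the domination principle (Proposition \ref{cor:domination}) in the form valid on $\mathcal{E}(X,\omega)$, and $\sup_X\f=0$ is the required upper bound. After normalizing $\sup_X\p=0$ (at the cost of replacing $g$ by a fixed multiple), I would fix once and for all a small $\varepsilon_0>0$, depending only on $\delta$ and on a constant $B$ with $dd^c\p\ge-B\omega_X$, so that $v:=\varepsilon_0\p+\rho$ is $\omega$-plurisubharmonic with $\sup_X v\le0$; this is exactly where the K\"ahler-current property $\omega+dd^c\rho\ge\delta\omega_X$ is used, to compensate both the negativity of $dd^c\p$ and the degeneracy of $\omega$. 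Since $\varepsilon_0\le1$ and $\p\le0$ one has $v\ge\p+\rho$, so a lower bound $\f\ge\alpha v-\beta$ immediately yields $\f\ge\alpha(\p+\rho)-\beta$; it therefore suffices to bound $\f$ from below by a multiple of $v$.

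The core is this relative a priori bound. I would regularize, solving $V^{-1}(\omega+dd^c\f_j)^n=\mu_j$ with $\mu_j\propto\min(g,j)\,e^{-\max(\p,-j)}\,dV_X$ normalized to mass one and $\f_j\in\PSH(X,\omega)\cap L^\infty$ the bounded solution provided by Theorem \ref{thm:uniform1}; here $\mu_j\to\mu$ in total variation. The point is to prove $\f_j\ge\alpha v_j-\beta$ with $v_j:=\varepsilon_0\max(\p,-j)+\rho$ and $\alpha,\beta$ depending only on $\|g\|_{L^p}$, uniformly in $j$. I would run the scheme of Theorems \ref{thm:uniform1} and \ref{thm:stability}: for a concave increasing weight $\chi$ with $\chi(0)=0$ and $\chi'\ge1$, set $w_j=\max(\f_j,\alpha v_j)$, $\eta_j=w_j+\chi\circ(\f_j-w_j)$ and $u_j=P_{\omega}(\eta_j)$, so that Lemma \ref{lem:GLkey} gives
$$
MA(u_j)\le 1_{\{u_j=\eta_j\}}\,(\chi'\circ(\f_j-w_j))^n\,\mu_j,
$$
and then estimate the energy of $u_j$, use compactness of sup-normalized $\omega$-psh functions together with Chebyshev's inequality, choose $\chi$ adaptively, and conclude through the resulting ODE inequality, exactly as in the proof of Theorem \ref{thm:uniform1}. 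The genuinely new input is the control of $\int_X(\chi'\circ(\f_j-w_j))^{n+2\varepsilon}d\mu_j$: here the identity $pq=p+q$ is decisive, for with the auxiliary probability measure $\nu_j\propto e^{-q\max(\p,-j)}dV_X$ one has $d\mu_j/d\nu_j\in L^p(\nu_j)$ with norm a fixed multiple of $\|g\|_{L^p}$, so that H\"older's inequality converts the $\mu_j$-integrals into $\nu_j$-integrals of powers of $\chi'$; and on $\{\f_j<\alpha v_j-t\}$ one has $\alpha\varepsilon_0\max(\p,-j)>\f_j+t-\alpha\rho\ge\f_j+t$, so there the weight $e^{-q\max(\p,-j)}$ is dominated by $e^{-q(\f_j+t)/(\alpha\varepsilon_0)}$, whose uniform integrability is what one must secure — this is governed by Skoda's integrability theorem and the Lelong numbers of $\p$, and is what forces the threshold $1/q$. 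Letting $j\to\infty$ (with $\f_j\to\f$ in $L^1(X)$ by compactness and uniqueness of finite-energy solutions, and $v_j\downarrow v$) gives $\f\ge\alpha v-\beta$ almost everywhere, hence everywhere since both sides are quasi-psh, and thus $\alpha(\p+\rho)-\beta\le\f$.

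For the remaining claims, fix $x\in\Omega={\rm Amp}(\omega)\setminus E_{1/q}(\p)$; then $\nu(\p,x)<1/q$, so by Skoda's theorem $e^{-q'\p}$ is locally integrable near $x$ for some $q'>q$, and H\"older's inequality with $g\in L^p$ shows that the density $ge^{-\p}$ lies in $L^{1+\varepsilon}_{\rm loc}(\Omega)$. Comparing $\f$ on a small ball with the bounded solution of the corresponding local Dirichlet problem — available from Kolodziej's local estimate \cite{Kol98}, or from the method of Section \ref{sec:local} — and invoking uniqueness of solutions of full Monge-Amp\`ere mass, one gets that $\f$ is locally bounded on $\Omega$; consequently $(\omega+dd^c\f)^n$ is the Bedford-Taylor measure there and the global non-pluripolar identity specializes to the classical equation $V^{-1}(\omega+dd^c\f)^n=fdV_X$ on $\Omega$. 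I expect the main obstacle to be the a priori estimate of the second step: since $\mu=ge^{-\p}dV_X$ need not integrate all $\omega$-psh functions, Theorem \ref{thm:uniform1} does not apply directly, and absorbing the exponential weight $e^{-\p}$ is what forces one to compare $\f$ with $\varepsilon_0\p+\rho$ rather than with a constant and to exploit the precise interplay of H\"older duality ($pq=p+q$) with Skoda's integrability theorem — which is also what pins down the Lelong-number threshold governing the locally bounded locus. The regularization and limiting step, and the local comparison, are routine but do require some care with finite-energy classes when $\omega$ is merely semi-positive.
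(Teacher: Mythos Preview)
Your plan for the core a priori estimate has a genuine gap. After the H\"older step with the auxiliary measure $\nu_j\propto e^{-q\max(\p,-j)}dV_X$ you must bound $\int_X(\chi'\circ(\f_j-w_j))^{qN}e^{-q\max(\p,-j)}dV_X$. On the region $\{\f_j\ge\alpha v_j\}$ one has $\chi'\equiv1$, so this contributes $\int_{\{\f_j\ge\alpha v_j\}}e^{-q\max(\p,-j)}dV_X$, and your sublevel-set conversion $e^{-q\max(\p,-j)}\le e^{-q(\f_j+t)/(\alpha\varepsilon_0)}$ does not apply there; nothing bounds this term uniformly in $j$ unless $\int_X e^{-q\p}dV_X<\infty$, which would force all Lelong numbers of $\p$ below $2/q$ and is not assumed. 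Even on the sublevel sets, bounding $\int_X e^{-q\f_j/(\alpha\varepsilon_0)}dV_X$ via the standard uniform Skoda theorem for sup-normalized $\omega$-psh functions only works when $q/(\alpha\varepsilon_0)$ lies below a fixed threshold; since Lemma~\ref{lem:GLkey} requires $\alpha v_j\in\PSH(X,\omega)$, hence $\alpha\le1$, this can fail. Your attribution of this integrability to ``Skoda's theorem and the Lelong numbers of $\p$'' conflates two separate mechanisms: the threshold $1/q$ is what governs local $L^{1+\e}$-integrability of $ge^{-\p}$ (your last paragraph), not the global relative bound.

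The paper's argument is substantially different and sidesteps these issues. After reducing, via Demailly's equisingular approximation, to $\p$ with analytic singularities (so that $\tilde{\p}:=a\p+\rho$ is $\omega$-psh), it takes $u=P_\omega(2\f-\tilde{\p})$ \emph{without} any concave weight and without regularizing. On the contact set $\{u=2\f-\tilde{\p}\}$ one has $a\p=2\f-u-\rho$, whence $e^{-\p}\le e^{u/a}e^{-2\f/a}$, and therefore $(\omega+dd^cu)^n\le c_1\,g\,e^{-2\f/a}dV_X$. The decisive integrability input is not Skoda for $\p$ but the fact that $\f$ lies in a compact subset of some finite-energy class $\mathcal{E}_{\chi_1}(X,\omega)$ (coming from the Orlicz class of $f$), which yields $\int_X e^{-\lambda\f}dV_X\le C_\lambda$ for \emph{every} $\lambda>0$; combined with $g\in L^p$, this makes $g\,e^{-2\f/a}dV_X$ satisfy the hypothesis of Theorem~\ref{thm:uniform1}, giving $u\ge-M$ and $2\f\ge\tilde{\p}-M$. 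The idea you are missing is thus to convert $e^{-\p}$ into $e^{-c\f}$ on the contact set of a linear envelope and then invoke the finite-energy Skoda bound (valid for all exponents), rather than trying to control $e^{-q\p}$ directly.
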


  When   $f \leq e^{-\p}$ for some quasi-psh function $\p$,
it has been shown by DiNezza-Lu 
 \cite[Theorem 2]{DnL17} that the normalized solution $\f$ to \eqref{eq:geom} is locally bounded in
the complement of the   set $\{\p=-\infty\}$.
The proof of DiNezza-Lu is a generalization of the method of Ko{\l}odziej \cite{Kol98} that makes use of 
a theory of generalized Monge-Amp\`ere capacities further developed in \cite{DnL15}.
 We slightly extend this result here and propose  
a brand new proof  
using envelopes and Theorem \ref{thm:uniform1}.

  \begin{proof}
  {\it Reduction to analytic singularities.}
We let $q$ denote the conjugate exponent of $p$,  set $r=\frac{2p}{p+1}$,
and note that $1 < r< p$.
If the Lelong numbers of $\p$ are all less than $\frac{1}{q}$, it follows from H\"older inequality that
$f \in L^r(dV_X)$, since
$$
\int_X f^r dV_X =\int_X g^r e^{-r \p} dV_X
\leq \left( \int_X g^p dV_X \right)^{\frac{r}{p}} \cdot 
\left( \int_X e^{-\frac{pr}{p-r} \p} dV_X \right)^{\frac{p-r}{p}} ,
$$
  where the last integral is finite by Skoda's integrability theorem  \cite[Theorem 8.11]{GZbook}
 if  $\frac{pr}{p-r} \nu(\p,x)<2$
 for all $x \in X$, which is equivalent to $\nu(\p,x)<\frac{1}{q}$.
 
 It is thus natural to expect that the solution $\f$ will be locally bounded in the complement 
 of the closed analytic set $E_{q^{-1}}(\p)$. 
 It follows from Demailly's equisingular approximation technique (see \cite{Dem15}) that
 there exists a sequence $(\p_m)$ of quasi-psh functions on $X$ such that
 \begin{itemize}
 \item $\p_m \geq \p$ and $\p_m \rightarrow \p$ (pointwise and in $L^1$);
 \item $\p_m$ has analytic singularities concentrated along $E_{m^{-1}}(\p)$;
 \item $dd^c \p_m \geq -K \omega_X$, for some uniform constant $K>0$;
 \item $\int_X e^{2m(\p_m-\p)} dV_X <+\infty$ for all $m$.
 \end{itemize}
  We choose $m=[q]$, set
  $g_m:=g e^{\p_m-\p}$, and observe that
 \begin{eqnarray*}
 \int_X g_m^r  & \leq  &
 \left( \int_X e^{2m(\p_m-\p)}dV_X \right)^{\frac{1}{2m}} \cdot 
\left( \int_X   g_m^{\frac{2mr}{2m-r}}  dV_X \right)^{\frac{2m-r}{2m}}  \\
& \leq & \left( \int_X e^{2m(\p_m-\p)}dV_X \right)^{\frac{1}{2m}} \cdot 
\left( \int_X   g_m^{p}  dV_X \right)^{\frac{2m-r}{2m}} <+\infty
 \end{eqnarray*}
  if we choose $r^{-1}=p^{-1}+(2m)^{-1}<1$ so that $\frac{2mr}{2m-r}=p$.
 By replacing $\p$ by $\p_{[q]} \geq \p$ and $g$ by $g_m \in L^r$
     in the sequel, we can thus  
  assume that 
  \begin{itemize}
  \item   $\p$  has analytic singularities and
   is smooth in $X \setminus E_{q^{-1}}(\p)$;
   \item the functions $\tilde{\p}:=a\p+\rho$ is $\omega$-psh, with $a:=\delta/K$.
  \end{itemize}

 \medskip

 \noindent  {\it Uniform integrability of $\f$}.
It is a standard measure theoretic fact that the density $f$ belongs to an Orlicz class
$L^w$ for some convex increasing weight $w:\R^+ \rightarrow \R^+$
such that $w(t)/t \rightarrow +\infty$ as $t \rightarrow +\infty$.
Set $\chi_1(t):=-(w^*)^{-1}(-t)$, where $w^*$ denotes the Legendre transform of $w$.
Thus $\chi_1:\R^- \rightarrow \R^-$  is a convex increasing weight 
such that $\chi_1(-\infty)=-\infty$ and
$$
\int_X (-\chi_1 \circ \f) (\omega+dd^c \f)^n 
\leq \int_X w \circ f dV_X+\int_X (-\f) dV_X \leq C_0,
$$
as follows from the additive version of H\"older-Young inequality
and the compactness of $\sup$-normalized $\omega$-psh functions.

It follows  that $\f$ belongs to a compact subset of the finite 
energy class ${\mathcal E}_{\chi_1}(X,\omega)$, hence
for all $\lambda \in \R$,
\begin{equation} \label{eq:SKodaDnL}
\int_X \exp(- \lambda \f) dV_X \leq C_{\lambda},
\end{equation}
for some $C_{\lambda}$ independent of $\f$ 
(see \cite{GZ07,GZbook} for more information).

    \smallskip
 
   \noindent  {\it The envelope construction}.
   Let $u=P(2\f-\tilde{\p})$ denote the greatest $\omega$-psh function that lies below $2\f-\tilde{\p}$.
   Since $h=2\f-\tilde{\p}$ is bounded from below and quasi-continuous, it follows from Proposition \ref{pro:orthog} that
  the measure 
   $(\omega+dd^c u)^n$ is supported on the contact set 
   ${\mathcal C}=\{ u=2\f-\tilde{\p}\}$. Thus
  $$
   (\omega+dd^c u)^n 
  \leq  1_{{\mathcal C}}  (\omega+dd^c (2\f-\tilde{\p}))^n   
\leq   1_{{\mathcal C}}  (2\omega+dd^c (2 \f))^n.
$$
Since $v \leq w$ on $X$, it follows from Lemma \ref{lem:Demkey} that
    \begin{equation} \label{eq:comp}
    1_{\{v=w\}} (2\omega+dd^c v)^n \leq  1_{\{v=w\}} (2\omega+dd^c w)^n,
    \end{equation}
where
   \begin{itemize}
   \item $v=u+\tilde{\p}$ is $2\omega$-psh and $u +\tilde{\p} \leq 2\f=w$ on $X$;
   \item $\{ u+\tilde{\p}=2 \f \}$ coincides with the contact set ${\mathcal C}$.
   \end{itemize}
 Therefore, it follows from \eqref{eq:comp} that
 \begin{eqnarray*}
  1_{{\mathcal C}}  (2\omega+dd^c (u+\tilde{\p}))^n 
   & \leq  & 1_{{\mathcal C}}  (2\omega+dd^c (2\f))^n  \\
  &\leq&  1_{{\mathcal C}}   2^n c g e^{-\p} dV_X \\
& \leq & 1_{{\mathcal C}}   2^n c g e^{u/a} e^{-2\f/a} dV_X
\leq c_1 g e^{-2\f/a} dV_X,
 \end{eqnarray*}
 using that $\sup_X u \leq c_2$ is uniformly bounded from above, as we explain below.
 
It follows from H\"older inequality and \eqref{eq:SKodaDnL} that the measure $g e^{-2\f/a} dV_X$ satisfies
the assumption of Theorem \ref{thm:uniform1}.
 We infer that $u\geq -M$ is uniformly bounded below, hence
  $$
  2\f = (2\f-\tilde{\p}) +\tilde{\p} \geq u+\tilde{\p} \geq \frac{\delta}{K}\p+\rho-M.
  $$
  The desired a priori estimate follows with $\beta=M/2$ and $\alpha=\max(1,\delta/2K)$.
   
  \smallskip
 
   \noindent  {\it Bounding $\sup_X u$ from above}.
   We can assume without loss of generality that $\sup_X \tilde{\p}=0$.
   Consider $G=\{\tilde{\p}>-1\}$; this is a non empty plurifine open set.
   Observe that for all $x \in G$,
   $u(x) \leq (2\f-\tilde{\p})(x) \leq 1$, hence
   $$
   u(x) -1 \leq V_{G,\omega}(x):=\sup \{ w(x) , \, w \in\PSH(X,\omega) \text{ with } w \leq 0 \text { on } G \}.
   $$
It follows from \cite[Theorem 9.17.1]{GZbook}   
 that $\sup_X V_{G,\omega}=C$ is finite since $G$ is non-pluripolar, thus
 $\sup_X u \leq c_2=1 +\sup_X V_{G,\omega}=1+C$.
   \end{proof}

   \section{The local context} \label{sec:local}

   \subsection{Cegrell classes}
   
    We fix $\Omega \subset \C^n$  a bounded hyperconvex domain, i.e. there exists a continuous  plurisubharmonic
     function $\rho: \Omega \rightarrow [-1,0)$ whose sublevel sets $\{\rho <-c\}\Subset \Omega$ 
     are relatively compact for all $c>0$. 

Let ${\mathcal T}(\Omega)$ denote the 
set of bounded 
plurisubharmonic 
functions $u$ in $\Omega$ such that $\lim _{z\to \zeta} u (z) = 0,$ 
for every $\zeta \in \partial \Omega,$ and $\int_\Omega(dd^c u )^n <+\infty$.
Cegrell \cite{Ceg98, Ceg04} has 
studied the
complex Monge-Amp\`ere operator $(dd^c \cdot)^n$ and 
 introduced different
classes of 
plurisubharmonic functions on which the latter is well defined: 
\begin{itemize}
\item  ${\rm DMA}(\Omega)$ is the set of psh functions
 $u$ such that for all $z_0 \in \Omega$, there exists a neighborhood $V_{z_0}$ of
$z_0$ and $u_j \in {\mathcal T}(\Omega)$ a decreasing sequence which
converges to  $u$ in $V_{z_0}$ and satisfies
$\sup_j \int_{\Omega} (dd^c u_j)^n <+\infty$.

\item 
a function $u$ belongs to ${\mathcal F}(\Omega)$ iff  there exists $u_j \in {\mathcal T}(\Omega)$
a sequence decreasing towards $u$ {in all of } $\Omega$, which satisfies
$\sup_j \int_{\Omega} (dd^c u_j)^n<+\infty$;

\item a function $u$ belongs to   ${\mathcal E}^p(\Omega)$ 
if there exists a sequence of
functions $u_j \in {\mathcal T}(\Omega)$ decreasing towards $u$ in   $\Omega$
with  $\sup_j \int_{\Omega} (-u_j)^p (dd^c u_j)^n<+\infty$. 
\item a function $u$ belongs to   ${\mathcal F}^p(\Omega)$ 
if there exists a sequence of
functions $u_j \in {\mathcal T}(\Omega)$ decreasing towards $u$ in   $\Omega$
with   $\sup_j \int_{\Omega} [1+(-u_j)^p] (dd^c u_j)^n<+\infty$.
\end{itemize}

Given $u \in \mathcal{E}^p(\Omega)$ we define the weighted energy of $u$ by 
\[
E_p(u):= \int_{\Omega} (-u)^p (dd^c u)^n<+\infty.
\]  
The operator $(dd^c \cdot )^n$ is well defined on these sets, and continuous under decreasing limits. If $u\in \mathcal{E}^p(\Omega)$ for some $p>0$ then $(dd^c u)^n$ vanishes on all pluripolar sets \cite[Theorem 2.1]{BGZ09}. If $u\in \mathcal{E}^p(\Omega)$ and $\int_{\Omega} (dd^c u)^n <+\infty$ then $u\in \mathcal{F}^p(\Omega)$. Also, note that 
$$
{\mathcal T}(\Omega) \subset {\mathcal F}^p(\Omega) \subset{\mathcal F}(\Omega)\subset {\rm DMA}(\Omega)
\; \; \text{ and } \; \; 
{\mathcal T}(\Omega) \subset {\mathcal E}^p(\Omega)\subset {\rm DMA}(\Omega). 
 $$
 
Cegrell has characterized the range of the complex Monge-Amp\`ere operator acting on the 
classes  ${\mathcal E}^p(\Omega)$:
 
  \begin{theorem} \label{thm:cegrell}
  \cite[Theorem 5.1]{Ceg98}
   	Let $\mu$ be a probability measure in $\Omega$.
   	 There exists a function $u\in {\mathcal F}^p(\Omega)$ such that $(dd^c u)^n = \mu$
   	 if and only if $ {\mathcal F}^p(\Omega) \subset L^p(\Omega)$.
   \end{theorem}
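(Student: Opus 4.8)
The plan is to prove the two implications separately. \emph{Necessity} is soft: if $u\in\mathcal F^p(\Omega)$ solves $(dd^c u)^n=\mu$, then for every $v\in\mathcal F^p(\Omega)$ the Cegrell energy inequality
$$\int_\Omega(-v)^p(dd^c u)^n\le D_{n,p}\,E_p(v)^{\frac{p}{p+n}}\,E_p(u)^{\frac{n}{p+n}}$$
(see \cite{Ceg98,BGZ09}) gives $\int_\Omega(-v)^p\,d\mu=\int_\Omega(-v)^p(dd^c u)^n<+\infty$, hence $\mathcal F^p(\Omega)\subset L^p(d\mu)$.

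For \emph{sufficiency}, assume $\mathcal F^p(\Omega)\subset L^p(d\mu)$. First I would pass to a quantitative form of the hypothesis, exactly as in the proof of Theorem \ref{thm:uniform1}: using the quasi-triangle inequality for $\mathcal F^p$ (see \cite{Ceg98}), this inclusion is equivalent to the finiteness of
$$A_p(\mu):=\sup\Big\{\int_\Omega(-v)^p\,d\mu\ :\ v\in\mathcal F^p(\Omega),\ E_p(v)\le 1\Big\},$$
and hence, by homogeneity $v\mapsto\lambda v$, to the bound $\int_\Omega(-v)^p\,d\mu\le A_p(\mu)\,E_p(v)^{\frac{p}{p+n}}$ for all $v\in\mathcal F^p(\Omega)$. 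In particular $\mu$ charges no pluripolar set, so Cegrell's decomposition applies and $\mu=f\,(dd^c\phi)^n$ with $\phi\in\mathcal T(\Omega)$ and $0\le f\in L^1((dd^c\phi)^n)$. Next I would solve the truncated equations: with $\mu_j:=\min(f,j)\,(dd^c\phi)^n$ one has $\mu_j\nearrow\mu$ and $\mu_j\le j\,(dd^c\phi)^n=(dd^c(j^{1/n}\phi))^n$, so by the existence theorem for measures dominated by the Monge-Amp\`ere mass of a bounded potential there is $u_j\in\mathcal T(\Omega)$ with $(dd^c u_j)^n=\mu_j$; since $(\mu_j)$ is increasing, the comparison principle in $\mathcal F(\Omega)$ yields $u_1\ge u_2\ge\cdots$.

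The heart of the proof — the local counterpart of the a priori estimate of Theorem \ref{thm:uniform1} — is a uniform bound $\sup_j E_p(u_j)\le C(A_p(\mu),n,p)$. Since here only the $p$-energy has to be controlled, and not the oscillation of $u_j$, this bound is in fact immediate and self-improving: using $\mathcal T(\Omega)\subset\mathcal F^p(\Omega)$ and $\mu_j\le\mu$,
$$E_p(u_j)=\int_\Omega(-u_j)^p(dd^c u_j)^n=\int_\Omega(-u_j)^p\,d\mu_j\le\int_\Omega(-u_j)^p\,d\mu\le A_p(\mu)\,E_p(u_j)^{\frac{p}{p+n}},$$
and since $E_p(u_j)<+\infty$ ($u_j$ being bounded) this forces $E_p(u_j)\le A_p(\mu)^{(p+n)/n}$. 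Alternatively, in the spirit of the proof of Theorem \ref{thm:uniform1}, the same bound can be reached through the envelope construction: writing $\psi=\chi\circ u_j$ for a concave increasing $\chi:\R^-\to\R^-$ with $\chi(0)=0$, $\chi'(0)=1$ and $w=P(\psi)$, the local version of Lemma \ref{lem:GLkey} yields $(dd^c w)^n\le 1_{\{w=\psi\}}(\chi'\circ u_j)^n\mu_j$, and a suitable choice of $\chi$ produces a Chebyshev decay of $\mu(u_j<-t)$ that is integrable against $t^{p-1}\,dt$ uniformly in $j$.

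Granting the uniform energy bound, I would conclude by passing to the limit: the decreasing limit $u:=\lim_j u_j$ stays in the compact energy sublevel set $\{\,v\in\mathcal F^p(\Omega):E_p(v)\le C\,\}$, so $u\not\equiv-\infty$ and $u\in\mathcal F^p(\Omega)$, while $(dd^c u)^n=\lim_j(dd^c u_j)^n=\lim_j\mu_j=\mu$ by continuity of the complex Monge-Amp\`ere operator along decreasing sequences of uniformly bounded energy (\cite{Ceg98,Ceg04}) together with monotone convergence on the right-hand side. Uniqueness of $u$ in $\mathcal F^p(\Omega)$ (indeed in $\mathcal E^p(\Omega)$) follows from the domination principle (\cite{Ceg04,BGZ09}). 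The step I expect to be the main obstacle is not a single estimate but the careful marshalling of the foundational facts on Cegrell classes — decomposition of $\mu$, bounded solvability, compactness of energy sublevel sets, continuity of the Monge-Amp\`ere operator under decreasing limits — around the one additional ingredient, the uniform energy bound; in the envelope variant one must moreover check that the minorants $P(\chi\circ u_j)$ are admissible (bounded, plurisubharmonic, vanishing on $\partial\Omega$) so that Lemma \ref{lem:GLkey} applies verbatim on $\Omega$.
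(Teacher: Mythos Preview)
The paper does not actually prove Theorem \ref{thm:cegrell}: it is stated as a citation of \cite[Theorem 5.1]{Ceg98}, with the remark that a simplified variational proof appears in \cite{ACC12}, and is then used as a black box in the proof of the subsequent theorem. There is therefore no ``paper's own proof'' to compare your proposal against.

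That said, your sketch is a sound outline of the standard argument. The necessity step via the Cegrell energy inequality is correct. For sufficiency, your self-improving energy bound $E_p(u_j)\le A_p(\mu)\,E_p(u_j)^{p/(p+n)}$ is exactly the right mechanism and is the heart of Cegrell's original proof; the surrounding ingredients you list (Cegrell decomposition of $\mu$, bounded solvability for the truncations, monotonicity via the comparison principle, continuity of $(dd^c\cdot)^n$ under decreasing limits in $\mathcal E^p$) are all available in \cite{Ceg98,Ceg04}. One small point: uniqueness is not part of the statement, so you need not invoke it. Your alternative ``envelope variant'' is closer in spirit to the new method of the present paper, but note that the paper's own local result (the theorem immediately following Theorem \ref{thm:cegrell}) uses the envelope technique to get an $L^\infty$ bound under the stronger hypothesis $m>n$, not merely an $\mathcal E^p$ bound under $\mathcal F^p\subset L^p$; adapting it to the weaker hypothesis would require the additional care you allude to.
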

   
   A simplified variational proof of this result has been provided in \cite{ACC12}.

 \subsection{Dirichlet problem}

We have the following local analogue of Theorem \ref{thm:uniform1}:   
   
   \begin{theorem}
   	Assume $\mu$ is a probability measure in $\Omega$ 
   	 and $\mathcal{F}(\Omega) \subset L^{m}(\mu)$, for some $m>n$. 
   	 Then there exists a unique bounded function $u\in \mathcal{F}(\Omega)$ such that $(dd^c u)^n = \mu$. 
   	 The upper bound on $\sup_{\Omega}|u|$ only depends  on $A_m(\mu),m,n$, where 
   	\[
   	A_m(\mu) := \sup \left \{\int_{\Omega} (-u)^m d\mu \; :\; u \in \mathcal{T}(\Omega)
   	\text{ with } \int_{\Omega} (dd^c u)^n \leq 1 \right \}.
   	\] 
   \end{theorem}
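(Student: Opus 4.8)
The plan is to mimic the proof of Theorem \ref{thm:uniform1} in the local (Cegrell) setting, replacing $\omega$-psh functions by psh functions in $\mathcal{F}(\Omega)$ and the global envelope $P_\omega$ by a relative extremal-type envelope. First I would establish existence: since $\mathcal{F}(\Omega)\subset L^m(\mu)\subset L^1(\mu)$ and $\mu$ does not charge pluripolar sets (being dominated by an $L^1$ weight against $(dd^c\cdot)^n$ of some $\mathcal{F}$-function is not automatic, so instead one uses that $\mathcal{F}(\Omega)\subset L^m(\mu)$ forces $\mu$ to vanish on pluripolar sets, because pluripolar sets are negligible for the sup-normalized family), one may apply Cegrell's range theorem (Theorem \ref{thm:cegrell}) with $p=1$, or directly the results of \cite{Ceg98,Ceg04}, to get a unique $u\in\mathcal{F}(\Omega)$ with $(dd^c u)^n=\mu$. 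Uniqueness follows from the comparison principle in $\mathcal{F}(\Omega)$. The real content is the quantitative bound $\sup_\Omega|u|\le C(A_m(\mu),m,n)$, and boundedness of $u$ will be a byproduct of that bound.

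The main step: set $T_{\max}:=\sup\{t>0:\mu(u<-t)>0\}$; by the domination principle in $\mathcal{F}(\Omega)$ it suffices to bound $T_{\max}$. I would fix a concave increasing $\chi:\R^-\to\R^-$ with $\chi(0)=0$, $\chi'(0)=1$, $\chi'\ge 1$, put $\psi=\chi\circ u$, and take $w$ to be the largest psh function in $\Omega$ with the right boundary behaviour lying below $\psi$ — i.e. $w=P_\Omega(\psi)$, the envelope within $\mathcal{F}(\Omega)$ (equivalently $\sup\{v\in\mathrm{PSH}(\Omega):v\le\psi\}^*$, which lies in $\mathcal{F}(\Omega)$ since $\psi\le 0$ has the right boundary values). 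The local analogue of Lemma \ref{lem:GLkey} — which is proved by exactly the same Bedford–Taylor / Lemma \ref{lem:Demkey} argument, using that $dd^c\psi\le\chi'\circ u\,dd^c u$ and that $(dd^c w)^n$ is concentrated on the contact set $\{w=\psi\}$ — gives
\[
(dd^c w)^n\le 1_{\{w=\psi\}}(\chi'\circ u)^n\,(dd^c u)^n=1_{\{w=\psi\}}(\chi'\circ u)^n\,\mu.
\]
From here the argument is verbatim that of Theorem \ref{thm:uniform1}: fix $\e>0$ with $n<n+3\e=m$; using $|\chi(t)|\le|t|\chi'(t)$, $w=\chi\circ u$ on the contact set, Hölder's inequality and the compactness of the sup-normalized family $\{v\in\mathcal{T}(\Omega):\int_\Omega(dd^c v)^n\le 1\}$ in $L^{n+2\e}(\mu)$, bound $\int_\Omega(-w)^\e(dd^c w)^n\le A_m(\mu)^\e\big(\int_\Omega(\chi'\circ u)^{n+2\e}d\mu\big)^{\frac{n+\e}{n+2\e}}$. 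Choosing $\chi$ so that $\int_\Omega(\chi'\circ u)^{n+2\e}d\mu\le 2$ via Lebesgue's layer-cake formula with $g(t)=[\chi'(-t)]^{n+2\e}$ and $g'(t)=\frac{1}{(1+t)^2\mu(u<-t)}$ for $t\le T_0<T_{\max}$, $g'(t)=(1+t)^{-2}$ for $t>T_0$, one gets a compactness bound on $w$, hence on $\|w\|_{L^m(\mu)}$, hence on $\|\chi\circ u\|_{L^m(\mu)}$, and Chebyshev gives $\mu(u<-t)\le\tilde A/|\chi|^m(-t)$. Setting $h(t)=-\chi(-t)$, the same ODE bootstrap ($h^m\le C(1+t)^2h''(h')^{n+2\e-1}$, multiply by $h'$, integrate) with $m+1>n+2\e+1>2$ and $h(1)\ge 1$ yields $T_0\le C'(n,m,\tilde A)$, and letting $T_0\to T_{\max}$ finishes the bound.

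The only genuinely new points relative to the global case — and thus where I expect to spend care — are (i) checking that the envelope $P_\Omega(\chi\circ u)$ really lands in $\mathcal{F}(\Omega)$ with $(dd^c\cdot)^n$ concentrated on the contact set, which requires the local Bedford–Taylor envelope theory for hyperconvex domains (the boundary values are handled because $\chi\circ u\le 0$ and $u\in\mathcal{F}(\Omega)$ is squeezed between two $\mathcal{F}$-functions, or more robustly because any psh $v\le\chi\circ u$ satisfies $v\le 0$ and is dominated by a multiple of the exhaustion $\rho$ near $\partial\Omega$), and (ii) the validity of the domination/comparison principle in $\mathcal{F}(\Omega)$, which is classical (Cegrell). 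Everything else transfers mechanically, since the finite-mass and quasi-continuity properties that made Lemma \ref{lem:GLkey} and the Hölder/compactness steps work are available in $\mathcal{F}(\Omega)$. I would remark at the end that, as in the Kähler case, specializing to $\mu=f\,dV$ with $f\in L^p(\Omega)$, $p>1$, recovers the classical $L^\infty$-estimate for the Dirichlet problem, and that combined with Cegrell's Theorem \ref{thm:cegrell} this gives continuity of the solution when the data are continuous.
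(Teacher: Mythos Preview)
Your overall strategy (envelope of $\chi\circ u$, Monge--Amp\`ere concentrated on the contact set, Chebyshev, ODE bootstrap) matches the paper, but you transplant the global argument too literally and this creates a gap at the ``compactness'' step. In the global proof the weighted energy $\int_X(-w)^\varepsilon(dd^c w)^n$ is used to bound $\sup_X w$ from below, via $(-\sup_X w)^\varepsilon\cdot V\le\int_X(-w)^\varepsilon\,MA(w)$; this works because the total Monge--Amp\`ere mass is the fixed volume $V$. Locally every $w\in\mathcal T(\Omega)$ already has $\sup_\Omega w=0$, while $\int_\Omega(dd^c w)^n$ is \emph{not} fixed, so your weighted-energy inequality does not yield the ``compactness bound on $w$'' you assert. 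What you actually need is a bound on the mass $\int_\Omega(dd^c w)^n$, since $A_m(\mu)$ is defined over mass-normalised test functions; once $\int_\Omega(dd^c w)^n\le 2$ you get $\int_\Omega(-w)^m\,d\mu\le 2^{m/n}A_m(\mu)$ by rescaling, and Chebyshev proceeds.

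The paper exploits exactly this: it skips the H\"older/weighted-energy detour entirely, bounds $\int_\Omega(dd^c w)^n\le\int_\Omega(\chi'\circ u)^n\,d\mu$ directly, and chooses $\chi$ with $g=(\chi')^n$ (exponent $n$, not $n+2\varepsilon$) so that this integral is $\le 2$. The ODE then reads $h^m\le nA_m(1+t)^2h''(h')^{n-1}$ and integrates the same way. Your more elaborate choice $g=(\chi')^{n+2\varepsilon}$ would still salvage the argument, since $(\chi')^n\le(\chi')^{n+2\varepsilon}$ gives the mass bound as a byproduct, but the weighted-energy computation is then a red herring and your stated reasoning (``compactness bound on $w$'' from $E_\varepsilon(w)\le C$) is not the mechanism that actually works.

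One further point: the paper first proves the a priori estimate under the assumption that the solution is already bounded (so that $\chi\circ\varphi$ and its envelope lie in $\mathcal T(\Omega)$, via $\psi\ge\chi'(-T_{\max})\varphi$), and then closes with an explicit approximation: write $\mu=f(dd^c\phi)^n$ by Cegrell's decomposition, solve $(dd^c\varphi_j)^n=\min(f,j)(dd^c\phi)^n$ with $\varphi_j\in\mathcal T(\Omega)$, apply the uniform estimate to each $\varphi_j$, and pass to the decreasing limit. You allude to this (``boundedness of $u$ will be a byproduct'') but the approximation needs to be spelled out, since the envelope manipulations require $\varphi$ bounded to start with.
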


   \begin{proof}
   We first explain why the integrability condition $\mathcal{F}(\Omega) \subset L^m(\Omega,\mu)$ is equivalent to the finiteness of $A_m$. Indeed, if $A_m$ is not finite then there exists a sequence $(u_j)$ in $\mathcal{T}(\Omega)$ such that $\int_{\Omega}(dd^c u_j)^n \leq 1$ but $\int_{\Omega} |u_j|^m d\mu \geq 4^{jm}$. Let $u:= \sum_{j=1}^{+\infty} 2^{-j} u_j$. Then, by \cite[Corollary 5.6]{Ceg04}, we have  $u\in \mathcal{F}(\Omega)$, but 
   \[
   \int_{\Omega} (-u)^m d\mu\geq 2^{-jm} \int_{\Omega} (-u_j)^m d\mu \geq 2^{jm} \to +\infty. 
   \]

   	
   It follows from 	Theorem \ref{thm:cegrell} that
   	there exists $\varphi \in \mathcal{F}(\Omega)$ such that $(dd^c \varphi)^n =\mu$.  We assume for the moment that $u\in \mathcal{T}$ is bounded and we establish a uniform bound for $\varphi$. Set 
 \[
 T_{\max}:= \sup \{t>0 \; : \; \mu(\f <-t) >0\}. 
 \]
 Our goal is to establish a precise bound on $T_{max}$.
    By definition,  $-T_{max} \leq \f$ almost everywhere with respect to $\mu$, hence $(dd^c \max(\varphi,-T_{\max}))^n \geq (dd^c \f)^n$ and the domination principle, \cite[Corollary 3.31]{GZbook}, gives $\varphi\geq -T_{\max}$, providing the desired a priori bound
    $|\f| \leq T_{max}$.

We let $\chi:  \mathbb{R}^- \rightarrow \R^-$ denote a {\it concave} increasing function
 such that $\chi(0)=0$ and $\chi'(0) = 1$. 
 We set $\p=\chi \circ \f$, $u=P(\p) \in \mathcal{T}(\Omega)$ the largest psh function in $\Omega$ which lies below $\psi$,  and observe that
\begin{eqnarray*}
dd^c \p &= & \chi' \circ \f \omega_\f+\chi'' \circ \f d\f \wedge d^c \f \leq  \chi' \circ \f dd^c \f.
\end{eqnarray*}
Since $\p \geq \chi'(-T_{\max}) \varphi$ and the latter is in $\mathcal{T}(\Omega)$ we deduce that $u\geq \chi'(-T_{\max}) \varphi$ and $u\in \mathcal{T}(\Omega)$.

 Although the function $\p$ is not psh, this provides a bound from above on the positivity of $dd^c \p$ which allows
 to control the Monge-Amp\`ere of its envelope, see \cite[Lemma 4.1 and Lemma 4.2]{DnGL20}, 
 $$
 (dd^c u)^n \leq {\bf 1}_{\{u=\p\}} (dd^c \p)^n 
 \leq  (\chi' \circ \f)^n \mu.
 $$
The above inequalities hold for smooth functions and the general case of bounded psh functions can be obtained as in the proof of Lemma \ref{lem:GLkey}.  

We thus get a uniform control on the Monge-Amp\`ere mass of $u$: 
 \begin{eqnarray*}
 \int_{\Omega}(dd^c u)^n 
 &\leq &  \int_{\Omega} (\chi' \circ \f)^n d\mu.
 \end{eqnarray*}
 We are going to choose below the weight $\chi$ in such a way that
$\int_{\Omega} ( \chi' \circ \f)^{n} d\mu =B \leq 2$ is a finite constant under control.
This provides a uniform upper bound on
 $||u||_{L^m(\mu)}$. Using Chebyshev inequality we thus obtain
\begin{equation} \label{eq:clefter}
 {\mu }(\f<-t) \leq \frac{\int_{\Omega} |\chi(\varphi)|^m d\mu}{|\chi|^m(-t)} \leq \frac{\int_{\Omega} |u|^m d\mu }{|\chi|^m(-t)} \leq  \frac{A_m}{|\chi|^m(-t)},
\end{equation}
where $A_m \geq 1$ is an upper bound for $\int_{\Omega} |u|^m d\mu$.

   \smallskip
 
\noindent  {\it Choice of $\chi$}.
 We use again Lebesgue's formula:   if $g:  \mathbb{R}^+ \rightarrow \R^+$  is  increasing and normalized by $g(0)=1$ then
$$
\int_{\Omega} g \circ (-\f) d\mu = \mu(\Omega) + \int_0^{T_{\max}} g'(t) {\mu }(\f<-t) dt.
$$
Setting $g(t)=[\chi' (-t)]^{n}$  we define $\chi$ by imposing $\chi(0)=0$, $\chi'(0)=1$, and 
$$
g'(t)=  \begin{cases}
	\dfrac{1}{(1+t)^2 {\mu }(\f<-t)}, \; \text{if}\;  t \in [0,T_0], \\
	\;\\
	\dfrac{1}{t^2+1}, \; \text{if}\;  t >T_0.
\end{cases}	
 $$
This choice guarantees that 
$$
\int_{\Omega}( \chi' \circ \f)^{n} d\mu \leq  \mu(\Omega)+ \int_0^{+\infty} \frac{dt}{(1+t)^2} =2.
$$

 \noindent  {\it Conclusion}.  We set $h(t)=-\chi(-t)$ and work with the positive counterpart of $\chi$. Note that $h(0)=0$ and $h'(t)=[g(t)]^{\frac{1}{n}}$ is positive increasing, hence
$h$ is convex increasing (so $\chi$ is concave increasing and negative).

Together with \eqref{eq:clefter} our choice of $\chi$ yields, for all $t\in [0,T_0]$, 
$$
\frac{1}{(1+t)^2g'(t)}={\mu }(\f<-t) \leq \frac{A_m}{h^m(t)}.
$$
This reads
$$
h^m(t) \leq A_m (1+t)^2 g'(t)=n A_m (1+t)^2 h''(t)  (h')^{n-1}(t).
$$
We integrate this inequality as in the proof of Theorem \ref{thm:uniform1}
and  obtain 
\[
T_0\leq C', 
\] 
for some uniform constant $C'$ depending on $n,m,A_m$.  

\smallskip

To finish the proof we write $\mu = f (dd^c \phi)^n$, where $0\leq f\in L^1(\Omega, (dd^c \phi)^n)$ and $\phi\in \mathcal{T}(\Omega)$. This is known as Cegrell's decomposition theorem \cite[Theorem 6.3]{Ceg98}. We next solve $(dd^c \f_j)^n = \min(f,j) (dd^c \phi)^n$ with $\f_j \in \mathcal{T}(\Omega)$. Since $(dd^c \f_j)^n \leq \mu$, our estimate above shows that $|\f_j| \leq C$ for a uniform constant $C$. The comparison principle also gives that $\f_j$ is decreasing and $\varphi\leq \varphi_j$, thus $u:=\lim_j \f_j \in \mathcal{F}(\Omega)$ is bounded and $(dd^c u)^n =\mu$. It  then follows from \cite[Theorem 5.15]{Ceg04} that $u=\varphi$, finishing the proof. 
   \end{proof}


\end{document}